\newcommand{\id}{\mathbbm{1}}
\newcommand{\ot}{\otimes}
\newcommand{\mc}{\mathcal}
\newcommand{\lbr}{\text{[}}
\newcommand{\rbr}{\text{]}}
\newcommand{\tr}{\triangleright}
\newcommand{\tl}{\triangleleft}
\newtheorem{prop}{Proposition}
\newtheorem{definition}{Definition}
\begin{document}

\title{Quantum kappa-deformed differential geometry and field theory}

\author{Flavio Mercati}
\email{flavio.mercati@gmail.com}
\affiliation{School of Mathematical Sciences, University of Nottingham, NG7 2RD, United Kingdom}

\begin{abstract}
I introduce in $\kappa$-Minkowski noncommutative spacetime the basic tools of quantum differential geometry, namely bicovariant differential calculus, Lie and inner derivatives, the integral, the Hodge-$\bm{*}$ and the metric. I show the relevance of these tools for field theory with an application
to complex scalar field, for which I am able to identify a vector-valued four-form
which generalizes the energy-momentum tensor. Its closedness is proved, expressing in a covariant form the conservation of energy-momentum.
\end{abstract}

\maketitle

\vspace{-36pt}

\tableofcontents

\section{Introduction}
\label{sec1}

The $\kappa$-deformation of the Poincar\'e algebra, $U(so(3,1)){\triangleright\!\!\!\blacktriangleleft} \mc A^*$ was introduced by Lukierski and
collaborators \cite{LukierskiInventsKpoincare1,LukierskiInventsKpoincare2}.
As the Poincar\'e algebra could not be deformed using the
Drinfeld--Jimbo scheme \cite{Drinfeld,Jimbo1,Jimbo2}, applicable only
to simple Cartan--Lie algebras, the starting point was the 
q-deformation of the 5-dimensional anti-de Sitter 
algebra $o(3,2) \to U_q(o(3,2))$.
Dimensionalizing the generator through the introduction
of a de Sitter radius $R$, one can perform the  the 
In\"on\"u--Wigner contraction of that algebra, $R \to \infty$ 
together with the $q \to 0$ limit, in
such a way that the dimensionful quantity
$$
\kappa^{-1} = R \, \log q~,
$$
remains constant. This contraction scheme was first introduced
by Celeghini and collaborators \cite{FirenzeGroup} and it is useful
to introduce a dimensionful deformation parameter.
Later Zakrzewski \cite{ZakrzewskiInventsKPGroup} used the same
r-matrix implied by the $\kappa$-Poincar\'e algebra to generate a
deformed Poisson structure on the Poincar\'e group, and its quantization
led to the $\kappa$-Poincar\'e quantum group $ \mc A{\triangleright\!\!\!\blacktriangleleft}\mathbbm{C}[SO(3,1)]$. This quantum group
was then proven to be dual to the $\kappa$-Poincar\'e algebra by
Kosinski and Maslanka. Majid and Ruegg clarified the bicrossproduct
structure of $\kappa$-Poincar\'e, consisting of a semidirect product of the classical Lorentz algebra $so(1,3)$ acting in a deformed way on the translation sector $\mc A^*$, and a backreaction of the momentum sector on the Lorentz 
transformations, which renders also the coalgebra semidirect.
This work allowed to introduce  in a consistent way an homogeneous
space of the $\kappa$-deformed symmetry $\mathcal A$, as the quotient Hopf algebra of the $\kappa$-Poincar\'e group with the Lorentz group
$ \mc A{\triangleright\!\!\!\blacktriangleleft}\mathbbm{C}[SO(3,1)]/\mathbbm{C}[SO(3,1)]$.
The result is a noncommutative Hopf algebra with primitive
coproduct, antipode and counit, which can be interpreted,
in a noncommutative-geometrical fashion, as the algebra of functions
over a noncommutative spacetime, called $\kappa$-Minkowski. Symbolically $\mathcal A \sim \mathbbm C_q[\mathbbm{R}^{3,1}]$.
Differential calculus is a fundamental tool which is necessary to study
field theory over $\kappa$-Minkowski. Several differential structures 
can be defined on a noncommutative  space, and the requirement
of bicovariance \cite{Woronowicz} is a particularly selective one.
Still, there are several bicovariant differential calculi, but in the
case of $\kappa$-Minkowski it makes sense to ask covariance under
the symmetries of this space, which are
encoded in the $\kappa$-Poincar\'e group. In particular, as 
Sitarz \cite{Sitarz} proved, there are no 4-dimensional bicovariant differential
calculi that are also Lorentz-covariant. The simplest calculus
that achieve this is 5-dimensional. This phenomenon of
the natural emergence of higher-dimensional calculi is a common
feature of several noncommutative spaces, as noticed by Majid \cite{MagidAlgebraicApproachII,majidbook}.

In this paper, I introduce a series of concepts which represent the
basis to do differential geometry in a noncommutative setting.
This allows to study field theory over $\kappa$-Minkowski, and
to construct manifestly Lorentz-covariant theories.
Particularly relevant, for this, have been the results of Radko and
Vladimirov \cite{Radko} and Brzezinski \cite{Brzezinski}, which are
extensively used in this paper. Several results are also based
on the star-product introduced by Sitarz and Durhuss \cite{SitarzDurhuss},
and the twisted graded trace introduced by the author with Sitarz \cite{KappaCorfu}.

In Section \ref{sec2}~I develop the 5-dimensional differential calculus
introduced by Sitarz beyond the one-forms, defining the entire
differential complex up to 5-forms, which are commutative and,
being isomorphic to 0-forms, close the complex.
Section \ref{sec3}~is devoted to the Lie derivative and the inner derivative,
exploiting the graded Hopf algebra construction for the differential
complex and its dual introduced in \cite{Radko}, from which a natural 
concept of Lie and inner derivative emerge.
In Section \ref{sec4}~I review the construction of the integral, that has
been defined in \cite{KappaCorfu}, and I derive some useful properties.
The last structure that I introduce is the Hodge-$*$. It is defined axiomatically
and it is then explicitly constructed. This is probably the most relevant
contribution of this paper, and is contained in Section \ref{sec5}.
Section \ref{sec6}~shows an application in field theory of all of the structures I introduced, the differential complex, the Lie and inner derivatives, and the Hodge-$*$. The application is the construction of a vector-valued 4-form, which is
the noncommutative analogous of the vector-valued three-form whose components are those of the energy-momentum tensor of a scalar field in Minkowski space. The closedness of this form express the energy and momentum conservation. This
reformulation in terms of differential forms of the conservation law allows
to express it in a manifestly covariant way, a feat that, without the language of differential forms is problematic in a noncommutative spacetime.
The last Section contains the conclusions.

\begin{center}{\bf Notation}\end{center}

Einstein's convention for the sum over repeated indices is assumed. Greex indices $\mu,\nu,\dots$ go from $0$ to $3$. Latin beginning-of-alphabet letters $a,b,\dots$ refer to indices going from $0$ to $4$. Latin letters following the $i$ ($j,k,l,\dots$) refer to spatial indices, $1,2,3$.  

Boldface symbols like $\bm \omega$ refer to $n$-forms, $n>0$, while regular ones (\emph{i.e.} $f$) refer to functions, or $0$-forms. 
With an overline ($\overline{z}$, $z \in \mathbbm{C}$) we indicate complex conjugation. The involution is represented with a dagger ($x^\dagger$, $x \in \Gamma^{\wedge}$ or $x \in U(so(3,1)){\triangleright\!\!\!\blacktriangleleft} \mc A^*$).

Symmetrization and antisymmetrization of indices are indicated by curly and square brackets:
$$
\omega_{\{123\}} = \omega_{123} + \omega_{213} + \omega_{231}  + \omega_{321} + \omega_{312} + \omega_{132} ~, \qquad
\rho_{[ab]} = \rho_{ab}  - \rho_{ba}~. 
$$

\section{Differential calculus over $\kappa$-Minkowski}
\label{sec2}

\subsection{$\kappa$-Minkowski and $\kappa$-Poincar\'e algebras}

The $\kappa$-Minkowski space was introduced by by Majid and Ruegg \cite{MajidRueggBicross}, as a homogeneous space of $\kappa$-deformed Poincar\'e symmetries. Majid and Ruegg identified the bicrossproduct structure of the 
$\kappa$-Poincar\'e algebra introduced by Lukierski, 
Nowicki and Ruegg \cite{LukierskiInventsKpoincare1,LukierskiInventsKpoincare2},
and this in turn allowed to correctly identify the homogeneous space
as a noncommutative space, dual to the translation subalgebra.

The $\kappa$-Minkowski algebra $\mc A$, understood
as an Hopf $*$-algebra (the involution is represented with the dagger $(\cdot)^\dagger$ operation) is generated by $x^\mu$, $\mu=0,\dots,3$,
\begin{equation}
\begin{array}{c}
[x_j , x_0] = \frac{i}{\kappa} \, x_j ~, ~~~Ê[x_j , x_k]= 0~, \vspace{6pt}\\
\Delta x^\mu = x^\mu \ot \id + \id \ot x^\mu ~,  \vspace{6pt}\\
\varepsilon(x^\mu) = 0 ~, \qquad S(x^\mu) = - x^\mu ~, \vspace{6pt}\\
(x^\mu)^\dagger = x^\mu,
\end{array}
\end{equation}
where $\kappa$ is a real deformation parameter. Commutative Minkowski spacetime
is obtained through the limit $\kappa \to 0$.
The translation algebra $\mc A^*$ is the dual Hopf $*$-algebra to $\kappa$-Minkowski,
\begin{equation}
\begin{array}{c}
[P_\mu, P_\nu]= 0~, \vspace{6pt}\\
\Delta P_0 = P_0 \ot \id + \id \ot P_0~, ~~~ \Delta P_j = P_j \ot \id + e^{-P_0/\kappa} \ot P_j~,  \vspace{6pt}\\
\varepsilon(P_\mu) = 0 ~, \qquad S(P_0) = - P_0 ~, \qquad S(P_j) = - e^{P_0/\kappa}  P_j ~, \vspace{6pt}\\
(P_\mu)^\dagger = P_\mu.
\end{array}
\end{equation}

The $\kappa$-Poincar\'e Hopf $*$-algebra $U(so(3,1)){\triangleright\!\!\!\blacktriangleleft} \mc A^*$ is the bicrossproduct generated by $N_j,R_k \in U(so(3,1))$, $P_\mu \in \mc A^*$, defined by the following additional relations
\begin{equation}
\begin{array}{c}
[N_j, P_k]= i \, \delta_{jk} \left(  \frac{\kappa}{2}(1 - e^{- 2 P_0/\kappa})  + \frac{1}{2\kappa}  |\vec{P}|^2 \right) - \frac{i}{\kappa} P_j P_k~, \vspace{6pt}\\
{[}N_j,P_0] = i \, P_j ~, ~~~ [R_j ,P_k ] = i \, \epsilon_{jkl} P_l  \, \vspace{6pt}\\
\Delta N_k  =  N_k \otimes \id + e^{-P_0/\kappa} \otimes N_k  + \frac{i}{\kappa} \epsilon_{klm}  P_l \otimes R_m~, ~~~ \Delta R_j = R_j \ot \id + \id \ot R_j ~, \vspace{6pt}\\
\varepsilon(N_j) = 0 ~, ~~~ \varepsilon(R_k) = 0Ê~, ~~~ S(N_j) = -e^{P_0/\kappa}N_j +\frac{i}{\kappa} \epsilon_{jkl} e^{\lambda P_0} P_k R_l ~, ~~~ S(R_k) = - R_k~, \vspace{6pt}\\
(N_j)^\dagger =N_j ~, ~~~Ê(R_k)^\dagger =R_k ~, .
\end{array} \label{BicrossRelations}
\end{equation}

The translation algebra acts covariantly from the left on $\kappa$-Minkowski,
\begin{equation}
t \tr x = x_{(1)}  \left<  t , x_{(2)} \right> ~, \qquad t \in \mc A^*~, f \in \mc A.
\end{equation}
and since the bicrossproduct construction involve a right action of $U(so(3,1))$ on $\mc A^*$, which is encoded into the commutators (\ref{BicrossRelations}):
\begin{equation}
t \tl h = [h,t]~, ~~ t \in \mc A^*~, h \in U(so(3,1))~,
\end{equation}
the $U(so(3,1))$ acts too from the left on $\mc A$, by dualizing the right-action on $\mc A^*$:
\begin{equation}
\left< t , h \tr x \right> = \left< t \tl h , x \right> ~, \qquad t \in \mc A^*~, x \in \mc A~, h \in U(so(3,1))~.
\end{equation}
Then there is a left covariant action of the whole $\kappa$-Poincar\'e algebra $U(so(3,1)){\triangleright\!\!\!\blacktriangleleft} \mc A^*$ on $\mc A$, which can be obtained by the action on the coordinate base $x^\mu$:
\begin{equation}
\begin{array}{c}
P_0 \tr x_0 = i ~, ~~ P_0 \tr x_j = 0Ê ~, ~~ P_j \tr x_0 = 0 ~, ~~  P_j \tr x_k = - i \, \delta_{jk}  ~,\vspace{6pt}\\
R_j \tr x_0 = 0 ~, ~~  R_j \tr x_k = \, \epsilon_{jkl} x_l ~, ~~ N_j \tr x_0 = x_j ~, ~~  N_j \tr x_k = \delta_{jk} x_0~,
\end{array}
\end{equation}
and extending it on products of coordinates through the coproducts of $U(so(3,1)){\triangleright\!\!\!\blacktriangleleft} \mc A^*$.

The left action of $\kappa$-Poincar\'e over $\kappa$-Minkowski is covariant under
involution, in the sense that
\begin{equation}
(h \tr x)^\dagger = S(h) \tr x^\dagger ~, \qquad h \in U(so(3,1)){\triangleright\!\!\!\blacktriangleleft} \mc A^*~, ~~ x \in \mc A~. \label{InvolutionAction}
\end{equation}

\subsection{Poincar\'e invariant differential calculus}

In \cite{Sitarz} a 5-dimensional bicovariant, Poincar\'e invariant differential calculus over
$\mc A$ is introduced. We refer to it as $\Gamma$. It is generated by $\bm{e}^\mu = \bm{d} x^\mu$ and $\bm{e}^4$, and it is an $\mc A$-$*$-bimodule\footnote{We can't make out of $\Gamma$ alone an Hopf algebra, because it cannot be closed under coproduct - the identity does not belong to $\Gamma$ (there is no such thing as the identity one-form).} defined by the following commutation relations
\begin{equation}
\begin{array}{c}
\lbr x_j , \bm{d} x_k \rbr = \frac{i}{\kappa} \delta_{jk}( \bm{d} t - \bm{e}^4) ~, \qquad  \lbr x_j , \bm{d} t \rbr = \frac{i}{\kappa} \bm{d} x_j~,  \vspace{6pt}\\
\lbr t , \bm{d} x_j \rbr = 0 ~, \qquad  \lbr t , \bm{d} t \rbr = \frac{i}{\kappa} \bm{e}^4~,  \vspace{6pt}\\
\lbr x_j , \bm{e}^4 \rbr = \frac{i}{\kappa} \bm{d} x_j ~, \qquad  \lbr t , \bm{e}^4 \rbr = \frac{i}{\kappa} \bm{d} t~,  \vspace{6pt}\\
(\bm{d} x^\mu)^\dagger = \bm{d} x^\mu~, \qquad (\bm{e}^4)^\dagger = \bm{e}^4 ~,
\end{array}\label{OneForms}
\end{equation}
which can be written in a more compact form as
\begin{equation}
[x^\mu , \bm{e}^\nu] =  \frac{i}{\kappa} (\eta^{\mu\nu} \bm{e}^0 - \eta^{0\nu} \bm{e}^\mu - \eta^{\mu\nu} \bm{e}^4 ) ~, \qquad [ x^\mu , \bm{e}^{4}] =   \frac{i}{\kappa} \bm{e}^\mu ~.
\label{OneFormsCompact}
\end{equation}
The rules above can be obtained from those calculated in \cite{Sitarz} with the substitutions
\begin{equation}
\begin{array}{l}
t \to - i \, t \\  x \to i \, x
\end{array}
~,~~
\begin{array}{l}
dt \to i \, \bm{d}t  \\ dx_j \to - i \, \bm{d}x_j 
\end{array}
~,~~
\phi \to  i \, \kappa \, \bm{e}^4~.
\end{equation}

The differential is a map $\bm d : \mc A \to \Gamma$ satisfying the Leibniz rule
\begin{equation}
\bm d (f g) = ( \bm d f ) g + f ( \bm d g ) ~,
\end{equation}
the commutation relations between functions and differential forms can be written as \cite{Radko}
\begin{equation}
\bm{e}^a f =   ( {\lambda^a}_b \tr f ) \bm{e}^b ~,
\end{equation}
where ${\lambda^a}_b \in \mc A^*$, and the differential map can be written as
\begin{equation}
\bm d f = (i \, \xi_a \tr f ) \, \bm{e}^a
\end{equation}
where, again, $\xi_a \in \mc A^*$. Then this, and the Leibniz rule for the differential imply that
the coproduct of $\xi_a$ is
\begin{equation}
\Delta( \xi_a ) =  \xi_b \otimes {\lambda^b}_a  + \id \otimes  \xi_a ~,
\end{equation}
and its antipode and counit are
\begin{equation}
S(\xi_a)  = - \xi_b \, S({\lambda^b}_a)  ~, \qquad \epsilon(\xi_a)=0 ~,
\end{equation}
and the associativity of the product between forms and functions ($\bm \omega (fg) = (\bm \omega f) g$, $\bm \omega \in \Gamma$ and $f,g, \in \mc A$) imply
\begin{equation}
\Delta(  {\lambda^a}_b  ) =   {\lambda^a}_c   \otimes  {\lambda^c}_b ~.
\end{equation}
From the formulas above the following additional properties can be derived \cite{Radko}:
\begin{equation}
\epsilon ( {\lambda^a}_b ) =  {\delta^a}_b ~, \qquad S(  {\lambda^a}_c )  {\lambda^c}_b =  {\delta^a}_b  ~.
\end{equation}

\begin{prop}
From the relation (\ref{OneForms}) one deduces the following expressions for ${\lambda^a}_b $,
\begin{equation}
{\lambda^a}_b = 
\left(\begin{array}{ccc}
\cosh \frac{P_0}{\kappa} + \frac{1}{2 \kappa^2} e^{P_0 / \kappa} |\vec P|^2 & ~~~~ \frac{1}{\kappa} \vec P ~~~~ & - \sinh \frac{P_0}{\kappa} - \frac{1}{2 \kappa^2} e^{P_0 / \kappa} |\vec P|^2
\\
\frac{1}{\kappa} e^{P_0/\kappa} \vec P & I & - \frac{1}{\kappa} e^{P_0/\kappa} \vec P
\\
- \sinh \frac{P_0}{\kappa} + \frac{1}{2 \kappa^2} e^{P_0 / \kappa} |\vec P|^2 &~~~~ \frac{1}{\kappa} \vec P ~~~~& \cosh \frac{P_0}{\kappa} - \frac{1}{2 \kappa^2} e^{P_0 / \kappa} |\vec P|^2
\end{array}\right) ~,
\end{equation}
where $I$ is the $3\times3$ identity matrix, and for $\xi_a$,
\begin{equation}
\xi_a = \left\{ - \kappa \, \sinh \frac{P_0}{\kappa} + \frac{1}{2 \kappa} e^{P_0 / \kappa} |\vec P|^2   ,   \vec P ,  \kappa \,  \cosh \frac{P_0}{\kappa} - \frac{1}{2 \kappa} e^{P_0 / \kappa} |\vec P|^2    - \kappa \right\} ~.
\end{equation}
\end{prop}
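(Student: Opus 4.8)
The plan is to pin down the functionals ${\lambda^a}_b, \xi_a \in \mc A^*$ through the three data that fix any element of $\mc A^*$ via the duality with $\mc A$: its counit, its pairing with the generators $x^\mu$, and its coproduct (the last determining the pairing with all higher monomials). Since (\ref{OneForms}) defines a genuine $\mc A$-bimodule, these functionals exist and are unique, so it suffices to exhibit the stated expressions and verify they reproduce all three data.

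First I would extract the first-order information. For $t \in \mc A^*$ the primitivity $\Delta x^\mu = x^\mu \ot \id + \id \ot x^\mu$ gives $t \tr x^\mu = \varepsilon(t)\, x^\mu + \langle t, x^\mu\rangle$. Using $\varepsilon({\lambda^a}_b) = {\delta^a}_b$, the defining relation $\bm e^a f =   ( {\lambda^a}_b \tr f ) \bm{e}^b$ specialized to $f=x^\mu$ reads $\bm e^a x^\mu = x^\mu \bm e^a + \langle {\lambda^a}_b, x^\mu\rangle\, \bm e^b$, i.e. $[x^\mu, \bm e^a] = -\langle {\lambda^a}_b, x^\mu\rangle\, \bm e^b$. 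Comparing with (\ref{OneFormsCompact}) reads off every pairing $\langle {\lambda^a}_b, x^\mu\rangle$; combined with $\langle P_0, x_0\rangle = i$ and $\langle P_j, x_k\rangle = -i\,\delta_{jk}$, these coincide with the linearization in $P_\mu$ of the stated matrix (sending $\cosh \tfrac{P_0}{\kappa}\to 1$, $\sinh\tfrac{P_0}{\kappa}\to \tfrac{P_0}{\kappa}$ and dropping the quadratic $|\vec P|^2$ pieces), which is the required first-order check.

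Next I would obtain $\xi_a$ cheaply from the fact that the calculus is inner. Because $[x^\mu, \bm e^4] = \tfrac{i}{\kappa}\bm e^\mu$ and $\bm d x^\mu = \bm e^\mu$, the map $f \mapsto i\kappa\,[\bm e^4, f]$ is a derivation $\mc A \to \Gamma$ agreeing with $\bm d$ on the generators; as both obey the Leibniz rule they agree everywhere, so $\bm d f = i\kappa\,[\bm e^4, f]$. Inserting $\bm e^4 f = ({\lambda^4}_b \tr f)\,\bm e^b$ and matching with $\bm d f = (i\,\xi_a \tr f)\,\bm e^a$ yields $\xi_b = \kappa\,{\lambda^4}_b$ for $b\neq 4$ and $\xi_4 = \kappa\,({\lambda^4}_4 - \id)$; that is, $\xi$ is the last row of $\lambda$ up to the constant shift enforcing $\varepsilon(\xi_4)=0$. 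The stated $\xi_a$ then follow at once from the stated $\lambda$, so the whole problem reduces to identifying $\lambda$.

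Finally I would fix $\lambda$ by its coproduct. Associativity $\bm\omega(fg)=(\bm\omega f)g$ forces ${\lambda^a}_b$ to be grouplike, $\Delta({\lambda^a}_b) = {\lambda^a}_c \ot {\lambda^c}_b$, and iterating this factorizes $\langle {\lambda^a}_b, x^{\mu_1}\cdots x^{\mu_n}\rangle$ into a product $\langle {\lambda^a}_{c_1}, x^{\mu_1}\rangle\cdots\langle {\lambda^{c_{n-1}}}_b, x^{\mu_n}\rangle$ of the first-order pairings already fixed; this is what makes the solution unique, so it only remains to verify that the stated matrix is grouplike. The ingredients are $\Delta e^{\pm P_0/\kappa} = e^{\pm P_0/\kappa}\ot e^{\pm P_0/\kappa}$, hence $\Delta\cosh\tfrac{P_0}{\kappa} = \cosh\tfrac{P_0}{\kappa}\ot\cosh\tfrac{P_0}{\kappa} + \sinh\tfrac{P_0}{\kappa}\ot\sinh\tfrac{P_0}{\kappa}$ and likewise for $\sinh$, together with $\Delta P_j = P_j\ot\id + e^{-P_0/\kappa}\ot P_j$ and the induced $\Delta(|\vec P|^2) = |\vec P|^2\ot\id + 2\sum_j e^{-P_0/\kappa}P_j\ot P_j + e^{-2P_0/\kappa}\ot|\vec P|^2$. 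Writing $C=\cosh\tfrac{P_0}{\kappa}$, $S=\sinh\tfrac{P_0}{\kappa}$, $E=e^{P_0/\kappa}$ and $Q=|\vec P|^2$, the main obstacle is the block of entries containing $Q$: there the cross term $2\sum_j e^{-P_0/\kappa}P_j\ot P_j$ of $\Delta(Q)$ must recombine with the product of the off-diagonal $\tfrac{1}{\kappa}\vec P$ and $\tfrac{1}{\kappa}E\vec P$ blocks and with the hyperbolic coproducts so that the matrix relation closes. I would check this explicitly on the $(0,0)$, $(0,4)$ and $(4,4)$ entries, the middle and off-diagonal-block entries being immediate; already the $(0,0)$ entry exhibits the mechanism, since $\Delta C$ supplies the $C\ot C + S\ot S$ piece while $\tfrac{1}{2\kappa^2}\Delta(EQ)$ supplies exactly the three remaining terms generated by the $\vec P$-block product. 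As a global consistency check one verifies that the stated $\xi_a$ satisfy the $5$-dimensional de Sitter relation $-\xi_0^2 + |\vec\xi|^2 + (\xi_4+\kappa)^2 = \kappa^2$, the momentum-space geometry underlying this calculus.
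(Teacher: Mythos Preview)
Your proposal is correct. The paper states this proposition without proof, leaving it as a direct deduction from the commutation relations~(\ref{OneForms}); the implicit method is to commute $\bm e^a$ through an ordered plane wave $e^{i\vec k\cdot\vec x}e^{-ik_0 x_0}$ by iterated application of~(\ref{OneFormsCompact}) and read off the matrix entries from the result.

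Your route is more structural and arguably cleaner: you pin down $\lambda$ by its counit, its first-order pairings with the generators, and the grouplike constraint $\Delta({\lambda^a}_b)={\lambda^a}_c\otimes{\lambda^c}_b$, and you extract $\xi$ from the inner-calculus identity $\bm d f=i\kappa[\bm e^4,f]$ rather than computing it separately. This reduces the task to a finite algebraic check (the coproduct identities for the $Q$-dependent entries, which you correctly identify as the only nontrivial ones) instead of an open-ended iteration, and it makes transparent why $\lambda$ lands in $SO(4,1)$ and why the de~Sitter constraint $-\xi_0^2+|\vec\xi|^2+(\xi_4+\kappa)^2=\kappa^2$ holds automatically. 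The paper's tacit plane-wave approach is more hands-on but less illuminating about the Hopf-algebraic mechanism; yours explains \emph{why} the answer has the form it does.
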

 
The elements ${\lambda^a}_b$ form a matrix which is an element of the 5-dimensional Lorentz group, $SO(4,1)$. In fact
\begin{equation}
\eta^{cd} {\lambda^a}_c {\lambda^b}_d = \eta^{ab} ~,
\end{equation}
where $\eta^{ab} = \text{diag} \{-1,1,1,1,1 \}$, as one can compute easily.

One could have adopted the opposite convention for the
commutation relations between functions and differential forms,
\begin{equation}
 f \, \bm{e}^a =    \bm{e}^b( {\sigma^a}_b \tr f ) ~,
\end{equation}
in this case, of course,
\begin{equation}
 {\sigma^a}_b = S({\lambda^a}_b) ~.
\end{equation}
Analogous relations hold for the differential
\begin{equation}
\bm d f = \bm{e}^a  ( i \, \chi_a \tr f ) ~, 
\end{equation}
where 
\begin{equation}
\chi_a = - S(\xi_a) ~, 
\end{equation}
the coproduct of $\chi_a$ is then
\begin{equation}
\Delta \chi_a = \chi_a \otimes \id + {\sigma^b}_a \otimes \chi_b ~.
\end{equation}

Now the elements $\chi_a$ are explicitly Lorentz covariant, in
the sense that, introducing a natural action of the $\kappa$-Poincar\'e
algebra $U(so(3,1)){\triangleright\!\!\!\blacktriangleleft} \mc A^*$
over one-forms in this way \cite{Sitarz},
\begin{equation}
h \tr (f \bm{d} g ) = (h^{(1)} \tr f) \bm{d} ( h^{(2)} \tr g ) ~, \qquad h \tr (\bm{d} f \,  g ) = \bm d (h^{(1)} \tr f) ( h^{(2)} \tr g ) ~,
\end{equation}
then 
\begin{equation}
R_j \tr \bm e_0 = 0 ~, ~~  R_j \tr \bm e_k = \, \epsilon_{jkl} \bm e_l ~, ~~ N_j \tr \bm e_0 = \bm e_j ~, ~~  N_j \tr \bm e_k = \delta_{jk} \bm e_0~,
\end{equation}
and the commutation relations between $\chi_a$ and $N_j$,$R_k$ are
\begin{equation}
[M_{\mu\nu},\chi_\rho] = i ( \eta_{\mu \rho} \chi_\nu - \eta_{\nu \rho} \chi_\mu) ~, \qquad
[M_{\mu\nu},\chi_4] = 0 ~,
\end{equation}
where $M_{0j} = N_j$ and $M_{jk} = \epsilon_{jkl} R_l$. Put in other way, $\chi_\mu$ transform like a 4-vector, while $\chi_4$ transforms like a scalar.

As a last remark for this section let's notice that both $\chi_a$ and $\xi_a$, when squared with the metric $\eta^{ab} = \text{diag} \{-1,1,1,1,1 \}$, generate the mass Casimir of $\kappa$-Poincar\'e \cite{LukierskiInventsKpoincare2},
\begin{equation}
\eta^{ab} \, \xi_a \, \xi_b = \eta^{ab} \, \chi_a \, \chi_b = \square_\kappa ~, \label{Casimir}
\end{equation}
which id invariant under antipode $S(\square_\kappa )  = \square_\kappa $ and is a central element of  $U(so(3,1)){\triangleright\!\!\!\blacktriangleleft} \mc A^*$.

\subsection{The differential complex (forms of degree higher than one)}

In \cite{Sitarz} it is shown that $\Gamma^2$ is generated by $\bm{e}^\mu \wedge \bm{e}^\nu  = - \bm{e}^\nu \wedge \bm{e}^\mu$ and $\bm{e}^\mu \wedge \bm{e}^4 = - \bm{e}^4 \wedge \bm{e}^ \mu $, with the additional relations
\begin{equation}
\bm{e}^j \wedge \bm{e}^j = - \bm{e}^0 \wedge \bm{e}^0 ~, 
\qquad
\bm{d e}^4 = i \, \kappa \, (\bm{e}^j \wedge \bm{e}^j - \bm{e}^0 \wedge \bm{e}^0 ) ~, 
\end{equation}
$\Gamma^2$  is another $\mc A$-$*$-bimodule, and the Jacobi identities applied to 
mixed products of the kind $x^\mu \bf{e}^a \wedge \bf{e}^b$ imply that\footnote{These relations were left as matters of choice in \cite{Sitarz}.}
\begin{equation}
\bm{e}^0 \wedge \bm{e}^0 = \bm{e}^1 \wedge \bm{e}^1=\bm{e}^2 \wedge \bm{e}^2=\bm{e}^3 \wedge \bm{e}^3=\bm{e}^4 \wedge \bm{e}^4 = 0 ~, \qquad \bm{d e}^4 = 0 ~;
\end{equation}
to make $\Gamma^2$ into an Hopf $*$-bimodule, we add the involution as
\begin{equation}
(\bm{e}^\mu \wedge \bm{e}^\nu)^\dagger = - \bm{e}^\mu \wedge \bm{e}^\nu ~, \qquad  (\bm{e}^\mu \wedge \bm{e}^4)^\dagger = - \bm{e}^\mu \wedge \bm{e}^4 ~.
\end{equation}

%$\Gamma^2$ is generated by $\bm{e}^\mu \wedge \bm{e}^\nu  = - \bm{e}^\nu \wedge \bm{e}^\mu$ and $\bm{e}^\mu \wedge \bm{e}^4 = - \bm{e}^4 \wedge \bm{e}^ \mu $, is another $\mc A$-$*$-bimodule,
%\begin{equation}
%\begin{array}{c}
%\lbr x_j , \bm{d} x_k \wedge \bm{d} t \rbr = \frac{i}{\kappa} \delta_{jk} \bm{d} t \wedge \bm{e}^4~, ~~
%\lbr x_j , \bm{d} x_k \wedge \bm{e}^4 \rbr = \frac{i}{\kappa} \delta_{jk} \bm{d} t \wedge \bm{e}^4~, \\\\
%\lbr x_j , \bm{d} t \wedge \bm{e}^4 \rbr = \frac{i}{\kappa} ( \bm{d}x_j \wedge \bm{e}^4 -  \bm{d}x_j \wedge \bm{d}t)~, ~~ \lbr t, \bm{d} x_k \wedge \bm{d} x_l \rbr = 0 ~, \\\\
%\lbr x_j , \bm{d} x_k \wedge \bm{d} x_l \rbr = 
%\frac{i}{\kappa} \delta_{jk} (\bm{d} t -\bm{e}^4)\wedge \bm{d} x_{l} -\frac{i}{\kappa} \delta_{jl} (\bm{d} t -\bm{e}^4)\wedge \bm{d} x_{k} ~, \\\\
%\lbr t , \bm{d} x_j \wedge \bm{d} t \rbr = \frac{i}{\kappa} \bm{d} x_j \wedge \bm{e}^4~, ~~
%\lbr t , \bm{d} x_j \wedge \bm{e}^4 \rbr = \frac{i}{\kappa} \bm{d} x_j \wedge \bm{d} t~, ~~
%\lbr t , \bm{d} t \wedge \bm{e}^4 \rbr = 0 ~, \\\\
%(\bm{e}^\mu \wedge \bm{e}^\nu)^\dagger = - \bm{e}^\mu \wedge \bm{e}^\nu ~, \qquad  (\bm{e}^\mu \wedge \bm{e}^4)^\dagger = - \bm{e}^\mu \wedge \bm{e}^4 ~,
%\end{array}
%\end{equation}
%the Jacobi identities applied to mixed products $x^\mu \bf{e}^a \wedge \bf{e}^b$ imply that  $\bm{e}^0 \wedge \bm{e}^0 = \bm{e}^1 \wedge \bm{e}^1=\bm{e}^2 \wedge \bm{e}^2=\bm{e}^3 \wedge \bm{e}^3=\bm{e}^4 \wedge \bm{e}^4 = 0$ (in \cite{Sitarz} these products are left free) and $\bm{d e}^4 = -i \kappa (\bm{d} t \wedge \bm{d} t - \bm{d} x \wedge \bm{d} x) = 0$. 

The commutation relations of all the $\Gamma^n$s can be found through the associative property
\begin{equation}
[ x^\mu , \bm{e}^{a_1}  \wedge \dots \wedge \bm{e}^{a_n} ] =
[ x^\mu , \bm{e}^{a_1}] \wedge \bm{e}^{a_2} \wedge \dots \wedge \bm{e}^{a_n}
+ \dots 
+ \bm{e}^{a_1} \wedge \bm{e}^{a_2} \wedge \dots \wedge [ x^\mu , \bm{e}^{a_n} ]~,
\end{equation}
and under involution the basic forms of $\Gamma^n$ behave as
\begin{equation}
\left(\bm e^{a_1} \wedge \dots \wedge \bm e^{a_n} \right)^\dagger = (-1)^{n(n-1)/2} \bm e^{a_1} \wedge \dots \wedge \bm e^{a_n} ~.
\end{equation}

Due to the graded-commutativity of the wedge product of base forms, $\Gamma^5$ is one-dimensional and is generated only by the (penta-) volume form $\bm{vol}^5 = \bm{e}^0 \wedge \bm{e}^1 \wedge \bm{e}^2 \wedge \bm{e}^3 \wedge \bm{e}^4$, which is self-adjoint
\begin{equation}
(\bm{vol}^5)^\dagger =  \bm{vol}^5 ~,
\end{equation}
and commutes with $\mc A$,
\begin{equation}
[x^\mu, \bm{vol}^5 ] = 0~,
\end{equation}
as can be easily proved by direct calculation.

In \cite{Brzezinski} is is shown that the entire exterior algebra $\Gamma^{\wedge} =  \mc{A} \oplus \Gamma \oplus \Gamma^2 \oplus \Gamma^3 \oplus \Gamma^4\oplus \Gamma^5$ can be made into a graded Hopf $*$-algebra, with coproduct
\begin{equation}
\Delta (\bm{e}^a) =  \bm{e}^a \otimes \id + \id \otimes \bm{e}^a~,
\end{equation}
antipode and counit
\begin{equation}
S(\bm{e}^a) = - \bm{e}^a ~, \qquad \epsilon(\bm{e}^a ) = 0 ~,
\end{equation}
where the extension of the multiplication to the tensor product is nontrivial, and satisfy the rule \cite{Brzezinski}
\begin{equation}
( \bm \omega \otimes \bm \rho ) \wedge (\bm \omega' \otimes \bm \rho' ) = (-1)^{nm} (\bm \omega \wedge \bm \omega' ) \otimes (\bm \rho \otimes \bm \rho' ) ~,
\end{equation}
where $\bm \omega, \bm \rho,\bm \omega',\bm \rho' \in \Gamma^{\wedge}$ and $\bm \rho$, $\bm \omega'$ are homogeneous forms of degree, respectively, $n$ and $m$.

Also the differential map can be extended to $\Gamma^{\wedge}$: it is a map $\bm d : \Gamma^n \to \Gamma^{n+1}$ obeying the graded Leibniz rule
\begin{equation}
\bm d ( \bm \omega \wedge \bm  \rho) = (\bm d \bm \omega) \wedge \bm \rho + (-1)^{n} \bm  \omega \wedge (\bm d  \bm \rho) ~,
\end{equation}
for $\bm \omega$ homogeneous  ($\bm \omega \in \Gamma^n$) and any $ \bm \rho \in \Gamma^{\wedge}$, and the nilpotency condition
\begin{equation}
\bm d \circ \bm d = 0 ~.
\end{equation}
The extension of $\bm d$ to the tensor product $\Gamma^{\wedge} \otimes \Gamma^{\wedge}$ is trivial \cite{Brzezinski}
\begin{equation}
\bm d ( \bm \omega \otimes \bm \rho) = (\bm d \bm \omega) \otimes \bm \rho + (-1)^{n} \bm \omega \otimes (\bm d \bm \rho) 
\end{equation}
for any $\rho \in \Gamma^{\wedge}$ and $\omega \in \Gamma^n$; the equation above implies that the coproduct of $\Gamma^{\wedge}$ and $\bm d$ commute:
\begin{equation}
\Delta \circ \bm d = \bm d \circ \Delta ~.
\end{equation}

From the covariance of the action of $\mc A^*$ under involution (\ref{InvolutionAction}), we deduce the covariance of the differential
\begin{equation}
\bm d ( \bm \omega^\dagger) = (-1)^n \, (\bm d  \bm \omega)^\dagger ~.
\end{equation}

\section{The $\kappa$-deformed Lie and inner derivatives}
\label{sec3}

In \cite{Radko} a (graded) Hopf algebra is built from $\Gamma^{\wedge}$ and $(\Gamma^{\wedge})^* = \mc A^* \oplus \Gamma^* \oplus (\Gamma^2)^*  \oplus (\Gamma^3)^*  \oplus (\Gamma^4)^*  \oplus (\Gamma^5)^*$ as the cross product $\Gamma^{\wedge} \rtimes (\Gamma^{\wedge})^*$.
The duality relations between  $\Gamma^{\wedge}$ and $(\Gamma^{\wedge})^*$ are such that
\begin{equation}
\left< \xi , \bm \omega \right> = 0 ~Ê\Leftarrow ~ \xi \in (\Gamma^n)^* , \bm \omega \notin \Gamma^n ~,
\end{equation}
and $\left< \xi , \omega \right>$ reduces to the duality relation between $(\Gamma^n)^*$ and  $\Gamma^n$ when $\xi \in (\Gamma^n)^*$ and $\omega \in \Gamma^n $.

\subsection{Lie derivative}

The algebra $\mc A^*$ in \cite{Radko} is interpreted as the space of left-invariant vector fields on $\mc A$, and the Lie derivative along an element $h$ of $\mc A^*$ is defined as the adjoint action of $\mc A^*$ over $\Gamma^{\wedge} \rtimes (\Gamma^{\wedge})^* $:
\begin{equation}
\bm \pounds_h := h \tr_{\text{ad}}~,
\end{equation}
that, on $\Gamma^{\wedge}$, reduces to
\begin{equation}
\bm \pounds_h \tr \bm \omega = \bm \omega_{(1)} \left< h , \bm \omega_{(2)} \right > ~, ~~ \forall \, \bm \omega \in \Gamma^{\wedge}~.
\end{equation}

Then the Lie derivative of forms along the vector field $h \in \mc A^*$ can be defined as a map $\bm \pounds_h : \Gamma^n \rightarrow \Gamma^n$ such that
\begin{equation}
\bm \pounds_h (\bm \omega) = ( h \tr  \omega_{a_1 \dots a_n} )\, \bm{e}^{a_1} \wedge \dots \wedge \bm{e}^{a_n} ~,
\end{equation}
for all $\bm \omega =  \omega_{a_1 \dots a_n}\, \bm{e}^{a_1} \wedge \dots \wedge \bm{e}^{a_n} \in \Gamma^n$.
The Lie derivative of products of forms depend on the coproduct of the vector field $h$:
\begin{equation}
\bm \pounds_h  ( \bm \omega \wedge \bm \rho ) = \bm \pounds_{h^{(1)}}  ( \bm \omega) \wedge \bm \pounds_{h^{(2)}} (\bm \rho ) ~,
\end{equation}
the coproduct of $h$ is in general non-primitive, with the exception of $P_0$ (the dual element to $x^0$), so in general $\bm \pounds_h$ does not satisfy the (graded) Leibniz rule.
We conclude this subsection with the observation, reported in \cite{Radko}, that the Lie derivative commutes with the differential,
\begin{equation}
\bm \pounds_h  \circ \bm d = \bm d \circ \bm \pounds_h ~.
\end{equation}

\subsection{Inner derivative}

The authors of \cite{Radko} propose to relate inner derivations with elements of $\Gamma^*$.
Starting from the base $\{ \theta_0,\theta_1,\theta_2,\theta_3,\theta_4\}$ of $\Gamma^*$ which is dual to the base $\{ \bm e^0,\bm e^1,\bm e^2,\bm e^3,\bm e^4\}$ of $\Gamma$ we define\footnote{The definition of the dual base in \cite{Radko} was different:
$\left< \theta_a ,  f \, \bm e^b \right> = \epsilon(f) \, {\delta^b}_a$. Here we need to put $f$ 
on the right to enforce Lorentz covariance (see below).}:
\begin{equation}
\left< \theta_a ,  \bm e^b  \, f \right> = \epsilon(f) \, {\delta^b}_a ~, \qquad f \in \mc A~, \label{DualityBracketsInnerDerivative}
\end{equation}
and has zero bracket with the other elements of $\Gamma^{\wedge}$
\begin{equation}
\left< \theta_a , \bm \omega \right> = 0 ~, ~~ \bm \omega \in \mc A,\Gamma^2,\Gamma^3,\Gamma^4,\Gamma^5~, 
\end{equation}
one defines a base of inner derivations $\bm i_a : \Gamma^n \to \Gamma^{n-1}$ in this way:
\begin{equation}
\bm i_a := \theta_a \tr_{\text{ad}} ~. 
\end{equation}
The inner derivation of functions is then zero,
\begin{equation}
\bm i_a (f) = f^{(1)} \left<\theta_a , f^{(2)} \right> = 0 ~, \qquad \forall \, f \in \mc A~,
\end{equation}
and that of one-forms is
\begin{equation}
\bm i_a (\bm \omega ) =  \omega_a~, \qquad \forall \, \bm \omega = \bm e^a \,  \omega_a\in \Gamma ~,
\end{equation}
while that of two-forms is
\begin{equation}
\bm i_a (\bm \omega ) =  \bm e^b \, (\omega_{ab} - \omega_{ba}) ~, \qquad \forall \, \bm \omega =  \bm e^a \wedge \bm e^b \, \omega_{ab}  \in \Gamma^2 ~.
\end{equation}
In general the inner derivative of an $n$-form can be written as
\begin{equation}
\bm i_a \omega = {\delta^{[b_n}}_a \bm{e}^{b_1} \wedge \dots \wedge \bm{e}^{b_{n-1}]}   \, \omega_{b_1 \dots b_n}~, \qquad \bm \omega =  \bm{e}^{a_1} \wedge \dots \wedge \bm{e}^{a_n} \,  \omega_{a_1 \dots a_n} \in \Gamma^n~.
\end{equation}

The inner derivative does not satisfy the graded Leibniz rule like in the commutative case: in fact, for example, the wedge product of two 2-forms is
\begin{equation}
\bm \omega \wedge \bm \rho = \bm e^a \, \omega_a \wedge \bm e^b \, \rho_b =
\bm e^a  \wedge \bm e^b  \, ( {\sigma^c}_b \tr \omega_a)  \rho_c~É 
\end{equation}
so that the inner derivative of $\bm \omega \wedge \bm \rho$ is
\begin{equation}
\bm i_a (\bm \omega \wedge \bm \rho) =   \left[  ( {\sigma^c}_b \tr \omega_a)  \rho_c -( {\sigma^c}_a \tr \omega_b)  \rho_c \right]\, \bm e^b ~.
\end{equation}

In \cite{Radko} it is shown that the Cartan identity for the Lie, inner and exterior derivatives holds without moifications,
\begin{equation}
\bm \pounds_{\chi_a}  = \bm d \circ \bm i_a + \bm i_a \circ \bm d~,
\end{equation}
and we see that our choice for the duality brackets defining the inner derivative (\ref{DualityBracketsInnerDerivative}) selects $\bm \pounds_{\chi_a} $, which is Lorentz-covariant in the sense that
\begin{equation}
\bm \pounds_{\chi_\mu}  ( M_{\rho\sigma} \tr f ) = M_{\rho\sigma} \tr \bm \pounds_{\chi_\mu} (f) - i \, \eta_{\mu [\rho}  \bm \pounds_{\chi_{\sigma]}}(f)  ~, \qquad f \in \mc A~.
\end{equation}

\section{Twisted-cyclic integral}
\label{sec4}

This section summarizes the results contained in \cite{KappaCorfu} about the
twisted graded trace. In addition, we remark the compatibility of the trace
with the involution, and we extend the results to the $3+1$-dimensional case.

Exploiting the commutativity between $\Gamma^5$ and $\mc A$ we introduce a left-invariant integral as in \cite{KappaCorfu}. The integral is a linear map
\begin{equation}
\int : \Gamma^5 \to \mathbbm{C}~,
\end{equation}
which respects the involution\footnote{This property is a straightforward consequence of the definition given in \cite{KappaCorfu}. The integral is introduced there as the standard Lebesgue integral over $\mathbb R^2$ (generalization to 4 dimensions is straightforward), applied to the functions which give a realization of the $\kappa$-Poincar\'e algebra through the $*$-product introduced in \cite{SitarzDurhuss}. The involution too has a realization in terms of these functions, and the property (\ref{InvolutionIntegral}) can be deduced from it.}
\begin{equation}
\overline{\left(\int f\right)} = \int f^\dagger~, \label{InvolutionIntegral}
\end{equation}
and is invariant under the left action of the whole $\kappa$-Poincar\'e algebra $U(so(3,1)){\triangleright\!\!\!\blacktriangleleft} \mc A^*$
\begin{equation}
\int h \triangleright \bm \rho = \varepsilon(h) \int \bm \rho~, \qquad \forall \, h \, \in \, U(so(3,1)){\triangleright\!\!\!\blacktriangleleft} \mc A^*
\end{equation}
where the action of $\kappa$-Poincar\'e algebra $U(so(3,1)){\triangleright\!\!\!\blacktriangleleft} \mc A^*$ on 5-forms is trivially induced from the action on $\mc A$:
\begin{equation}
\bm \rho = \rho \, \bm{vol}^5  ~~Ê\Rightarrow ~~Êh \triangleright \bm \rho ~ \dot = ~ ( h \triangleright \rho) \bm{vol}^5  ~.
\end{equation}

\begin{prop}
The integral is \emph{closed}, in the sense that
\begin{equation}
\int \bm d \bm \omega = 0~, \qquad \forall \, \bm \omega \, \in \, \Gamma^4~,
\end{equation}
\end{prop}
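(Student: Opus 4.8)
The plan is to reduce the statement to the fact that the integral annihilates total derivatives, which in turn follows from its left-invariance under the translation generators $\xi_a \in \mc A^*$. Concretely, I would expand an arbitrary $\bm \omega \in \Gamma^4$ in the basis of $\Gamma^4$, writing $\bm \omega = \omega_{a_1 a_2 a_3 a_4}\, \bm e^{a_1}\wedge \bm e^{a_2}\wedge \bm e^{a_3}\wedge \bm e^{a_4}$ with coefficient functions $\omega_{a_1\dots a_4}\in\mc A$, and compute $\bm d\bm \omega$ via the graded Leibniz rule.

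First I would observe that all the basis one-forms are closed: $\bm d\bm e^\mu = \bm d\bm d x^\mu = 0$ by nilpotency, and $\bm d\bm e^4 = 0$ by the choice fixed in Section \ref{sec2} through the Jacobi identities. Hence $\bm d(\bm e^{a_1}\wedge\dots\wedge\bm e^{a_4}) = 0$, and only the derivative of the coefficient survives:
\begin{equation}
\bm d\bm \omega = (\bm d\omega_{a_1\dots a_4})\wedge \bm e^{a_1}\wedge\dots\wedge\bm e^{a_4} = (i\,\xi_b\tr\omega_{a_1\dots a_4})\, \bm e^b\wedge \bm e^{a_1}\wedge\dots\wedge\bm e^{a_4}~,
\end{equation}
where I used $\bm d f = (i\,\xi_b\tr f)\,\bm e^b$. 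Since the only nonvanishing five-fold wedges are the permutations of $\bm e^0\wedge\dots\wedge\bm e^4$, the right-hand side reduces to $(i\,\xi_b\tr\omega_{a_1\dots a_4})$ times a fully antisymmetric numerical coefficient, all multiplying $\bm{vol}^5$.

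The final step is to integrate. Because functions commute with $\bm{vol}^5$ (recall $[x^\mu,\bm{vol}^5]=0$), there is no ordering ambiguity, and each term is of the form $\int(\xi_b\tr\rho)\,\bm{vol}^5$ for some $\rho\in\mc A$. Invoking the left-invariance of the integral under the whole $\kappa$-Poincar\'e algebra with $h=\xi_b$, this equals $\varepsilon(\xi_b)\int\rho\,\bm{vol}^5$, and since the counit of $\xi_b$ vanishes, $\varepsilon(\xi_b)=0$, every term is zero, giving $\int\bm d\bm \omega = 0$.

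I expect the only delicate points to be bookkeeping rather than genuine obstacles: verifying that every $\bm d\bm e^a$ vanishes, so that no boundary-like correction terms appear, and checking the antisymmetrization combinatorics in the reduction to $\bm{vol}^5$. The conceptual heart of the argument is simply that $\xi_a$ acts as a deformed translation generator with $\varepsilon(\xi_a)=0$, so the closedness of the integral is exactly the statement that a translation-invariant integral kills total derivatives.
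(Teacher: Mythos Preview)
Your proposal is correct and follows essentially the same argument as the paper: expand $\bm\omega$ in basis four-forms, compute $\bm d\bm\omega = (i\,\xi_a\tr\omega_{b_1\dots b_4})\,\bm e^a\wedge\bm e^{b_1}\wedge\dots\wedge\bm e^{b_4}$, reduce to $\bm{vol}^5$, and invoke left-invariance together with $\varepsilon(\xi_a)=0$. The only difference is that you spell out explicitly why $\bm d\bm e^a=0$ and why there is no ordering issue with $\bm{vol}^5$, details the paper takes for granted.
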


\begin{proof}
The closedness follows from the left-invariance under the action of $\mc A^*$, in fact for every 4-form $\bm  \omega$:
\begin{equation}
\bm d \bm \omega  = (\xi_a \tr \omega_{b_1 b_2 b_3 b_4} ) \, \bm e^a \wedge  \bm e^{b_1} \wedge \dots \wedge  \bm e^{b_2} = \xi_{[0} \tr \omega_{1234]} \, \bm{vol}^5 ~,
\end{equation}
then 
\begin{equation}
\int \bm d \bm \omega  = \varepsilon( \xi_{[0})  \int \omega_{1234]} \, \bm{vol}^5  = 0~.
\end{equation}
\end{proof}

The integral, however, is not cyclic with respect to the product of 5-forms with elements of $\mc A$. Instead, it satisfies a twisted cyclic property \cite{Kustermans,KappaCorfu}:
\begin{equation}
\int f g \,\bm{vol}^5 = \int g (T \tr f ) \, \bm{vol}^5 ~,
\end{equation}
where $T \in \mc A^*$ is an automorphism of  $\mc A$:	
\begin{equation}
\Delta(T) = T \otimes T ~, \qquad S(T) T = T S(T) = \idÊ~, \qquad \epsilon(T) = 1~.
\end{equation}
The explicit expression of $T$ is
\begin{equation}
T = e^{3 P_0 /\kappa} ~,
\end{equation}
which is the same result obtained in \cite{KappaCorfu}, but elevated to the third power.

\begin{prop}
The twisted cyclicity property holds also for products of forms.
\end{prop}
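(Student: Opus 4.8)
The plan is to establish the homogeneous version of twisted cyclicity in the form
\begin{equation}
\int \bm\omega \wedge \bm\rho = (-1)^{nm} \int \bm\rho \wedge \bm \pounds_T (\bm\omega) ~, \nonumber
\end{equation}
for $\bm\omega \in \Gamma^n$ and $\bm\rho \in \Gamma^m$ with $n+m = 5$ (both sides vanish otherwise, since $\int$ is supported on $\Gamma^5$). This is the natural lift of the scalar identity $\int fg\,\bm{vol}^5 = \int g(T\tr f)\,\bm{vol}^5$: the left factor is carried to the right, acted on by the twist, and accompanied by the Koszul sign $(-1)^{nm}$. Here $\bm \pounds_T$ acts only on components, and since $T = e^{3 P_0/\kappa}$ is group-like it fixes the frame, $\bm \pounds_T(\bm e^a) = \bm e^a$.

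Rather than expanding everything in components at once, I would show that the identity is multiplicative in its first slot and then verify it on the generators of $\Gamma^{\wedge}$, namely the functions and the basis one-forms $\bm e^a$. For the multiplicativity, assume the identity for $\bm\omega_1 \in \Gamma^{n_1}$ and $\bm\omega_2 \in \Gamma^{n_2}$ against every $\bm\rho$; applying it first to $\bm\omega_1$ against $\bm\omega_2 \wedge \bm\rho$ and then to $\bm\omega_2$ against $\bm\rho \wedge \bm \pounds_T(\bm\omega_1)$, the two Koszul signs telescope, $n_1(n_2+m)+n_2(m+n_1) \equiv (n_1+n_2)m \pmod 2$, while $\bm \pounds_T(\bm\omega_1) \wedge \bm \pounds_T(\bm\omega_2) = \bm \pounds_T(\bm\omega_1 \wedge \bm\omega_2)$ because $\Delta T = T \ot T$ makes $\bm \pounds_T$ a homomorphism for the wedge product. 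Thus the property passes to $\bm\omega_1 \wedge \bm\omega_2$, hence by linearity to all of $\Gamma^{\wedge}$, once it is known on generators.

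Two base cases then remain. For $\bm\omega = f$ a $0$-form, $\bm\rho$ must be a $5$-form $F\,\bm{vol}^5$, and the statement reduces to $\int fF\,\bm{vol}^5 = \int F(T\tr f)\,\bm{vol}^5$, which is precisely the scalar twisted cyclicity together with $[\bm{vol}^5 , g]=0$. For $\bm\omega = \bm e^a$ the sign is $(-1)^{4}=1$ and $\bm \pounds_T(\bm e^a) = \bm e^a$, so I only need $\int \bm e^a \wedge \bm\rho = \int \bm\rho \wedge \bm e^a$ for $\bm\rho \in \Gamma^4$. Writing $\bm\rho = \bm e^{b_1} \wedge \dots \wedge \bm e^{b_4}\, \rho_{b_1 \dots b_4}$, the left side collapses to $\epsilon^{a b_1 \dots b_4} \int \rho_{b_1 \dots b_4}\, \bm{vol}^5$, while on the right I move the component past $\bm e^a$ using $f\,\bm e^a = \bm e^b (\sigma^a_{\ b} \tr f)$ and invoke left-invariance, $\int (\sigma^a_{\ b} \tr \rho_{b_1 \dots b_4})\,\bm{vol}^5 = \epsilon(\sigma^a_{\ b}) \int \rho_{b_1 \dots b_4}\,\bm{vol}^5 = \delta^a_{\ b} \int \rho_{b_1 \dots b_4}\,\bm{vol}^5$, obtaining $\epsilon^{b_1 \dots b_4 a} \int \rho_{b_1 \dots b_4}\,\bm{vol}^5$. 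The two Levi-Civita orderings differ by moving $a$ through four slots, $(-1)^4=1$, so the sides agree.

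The main obstacle is this $\bm e^a$ case, and precisely the bookkeeping that makes the twist invisible there: one must check that the Lorentz factor $\sigma^a_{\ b} = S(\lambda^a_{\ b})$ produced by commuting the component past the one-form is sent by the counit to $\delta^a_{\ b}$ (which is the content of $\det \lambda = 1$, i.e. $\epsilon(S(\lambda^a_{\ b})) = \epsilon(\lambda^a_{\ b}) = \delta^a_{\ b}$), and that the reordering of the $5$-dimensional Levi-Civita symbol contributes a trivial sign. Conceptually, the whole weight of the twist lives in the $0$-form case, and the higher-form content is pure left-invariance of $\int$; the delicate point is only to confirm that nothing beyond $\epsilon(\sigma^a_{\ b}) = \delta^a_{\ b}$ and the Koszul signs is needed, so that the group-like twist $\bm \pounds_T$ never has to act on the frame. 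Were the calculus not Lorentz-invariant, or $T$ not group-like, this step would instead require the full $SO(4,1)$ determinant identity relating products of the $\lambda^a_{\ b}$, which is where a direct component computation would become heavy.
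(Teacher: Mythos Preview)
Your proof is correct, but it takes a genuinely different route from the paper's. The paper expands $\int \bm\omega \wedge \bm\rho$ directly in components for arbitrary degree $n$, commutes the component $\rho_{a_{n+1}\dots a_5}$ past the basis forms $\bm e^{a_1}\wedge\dots\wedge\bm e^{a_n}$ picking up a product ${\lambda^{a_1}}_{b_1}\cdots{\lambda^{a_n}}_{b_n}$, and then invokes the full $SO(4,1)$ determinant identity
\[
{\lambda^{a_1}}_{b_1}\cdots{\lambda^{a_n}}_{b_n}\,\epsilon^{b_1\dots b_n a_{n+1}\dots a_5}
= S({\lambda^{a_{n+1}}}_{b_{n+1}})\cdots S({\lambda^{a_5}}_{b_5})\,\epsilon^{a_1\dots a_n b_{n+1}\dots b_5}
\]
to transfer the $\lambda$'s onto the other block, after which the scalar twisted cyclicity is applied once. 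Your argument instead proves multiplicativity of the identity in its first slot and reduces to the two generating cases $f$ and $\bm e^a$; in the $\bm e^a$ case you only need $\epsilon({\sigma^a}_b)=\delta^a_{\ b}$ and left-invariance of the integral, never the determinant identity. The gain of your approach is conceptual economy: it makes transparent that the twist $T$ enters only through the scalar cyclicity and that the higher-form content is pure left-invariance, exactly as you say. The paper's computation, on the other hand, makes explicit the role of the $SO(4,1)$ structure of the ${\lambda^a}_b$ and would generalize verbatim to any calculus whose braiding matrix preserves a volume form, without needing $T$ to be group-like or the Lie derivative $\bm\pounds_T$ to be multiplicative.
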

\begin{proof}
If  $ \bm \omega \, \in \, \Gamma^n~, ~ \bm \rho \, \in \, \Gamma^{5-n}$
\begin{eqnarray}
\int \bm \omega \wedge \bm \rho
&=& \int   \omega_{a_1 \dots a_n} \,  \bm e^{a_1} \wedge \dots  \wedge \bm e^{a_n} \wedge  \rho_{a_{n+1}\dots a_5}  \bm e^{a_{n+1}} \wedge \dots  \wedge \bm e^{a_5}
\nonumber\\
&=&  \int   \omega_{a_1 \dots a_n} \left( {\lambda^{a_1}}_{b_1} \dots {\lambda^{a_n}}_{b_n} \tr \rho_{a_{n+1}\dots a_5}\right)  \bm e^{b_1} \wedge \dots  \wedge \bm e^{b_n}   \wedge \bm e^{a_{n+1}} \wedge \dots  \wedge \bm e^{a_5}
\nonumber\\
&=&  \frac{1}{5!} \epsilon^{b_1 \dots b_n a_{n+1} \dots a_5} \int   \omega_{a_1 \dots a_n} \left( {\lambda^{a_1}}_{b_1} \dots {\lambda^{a_n}}_{b_n} \tr \rho_{a_{n+1}\dots a_5}\right) ~,
\nonumber 
\end{eqnarray}
now the ${\lambda^a}_b$s are $SO(4,1)$ matrices, then they leave the 5-dimensional Levi-Civita symbol invariant, and the following is true:
\begin{equation}
 {\lambda^{a_1}}_{b_1} \dots {\lambda^{a_n}}_{b_n} \epsilon^{b_1 \dots b_n a_{n+1} \dots a_5} =   S({\lambda^{a_{n+1}}}_{b_{n+1}}) \dots S({\lambda^{a_5}}_{b_{5}})  \epsilon^{a_1 \dots a_n b_{n+1} \dots b_5} ~, 
 \end{equation}
 then
 \begin{eqnarray}
&&\int \bm \omega \wedge \bm \rho
=  \frac{1}{5!} \epsilon^{a_1 \dots a_n b_{n+1} \dots b_5}  \int   \omega_{a_1 \dots a_n} \left( S({\lambda^{a_{n+1}}}_{b_{n+1}}) \dots  S({\lambda^{a_5}}_{b_{5}})  \tr \rho_{a_{n+1}\dots a_5}\right) ~,
\nonumber 
\\
&&=  \frac{1}{5!} \epsilon^{a_1 \dots a_n b_{n+1} \dots b_5}  \int  \left( S({\lambda^{a_{n+1}}}_{b_{n+1}}) \dots  S({\lambda^{a_5}}_{b_{5}})  \tr \rho_{a_{n+1}\dots a_5}\right) \left(T \tr\omega_{a_1 \dots a_n} \right) ~,
\nonumber 
\\
&&=   \int  \, \rho_{a_{n+1}\dots a_5}  \left(  {\lambda^{a_{n+1}}}_{b_{n+1}} \dots {\lambda^{a_5}}_{b_{5}}  T \tr\omega_{a_1 \dots a_n} \right) \bm e^{a_1} \wedge \dots \wedge \bm e^{a_n} \wedge \bm e^{ b_{n+1}}\wedge \dots \wedge \bm e^{b_5}  ~,
\nonumber 
\\
&&=   \int  \, \rho_{a_{n+1}\dots a_5} \, \bm e^{ a_{n+1}}\wedge \dots \wedge \bm e^{a_5} \left( T \tr\omega_{a_1 \dots a_n} \right) \bm e^{a_1} \wedge \dots \wedge \bm e^{a_n}  ~,
\nonumber 
\\
&&= \int \bm \rho \wedge ( T \tr \bm \omega ) ~. \nonumber
\end{eqnarray}
\end{proof}

%
%$$
%\int \bm \omega \wedge \bm \rho  = \int \omega_{a_1} \bm e^{a_1} \, \rho_{a_2 \dots a_5} \bm e^{a_2} \wedge \dots \wedge \bm e^{a_5}
%= \int \omega_b ( {\lambda^b}_{a_1} \tr \rho_{a_2 \dots a_5}) \, \bm e^{a_1} \wedge \dots \wedge \bm e^{a_5}
%$$
%$$
%= \int {\lambda^b}_{a_1} \tr \left[Ê(S({\lambda^c}_{b}) \tr \omega_c) ( \rho_{a_2 \dots a_5}) \right] \, \bm e^{a_1} \wedge \dots \wedge \bm e^{a_5}
%= {\delta^b}_{a_1} \int \left[Ê(S({\lambda^c}_{b}) \tr \omega_c) ( \rho_{a_2 \dots a_5}) \right] \, \bm e^{a_1} \wedge \dots \wedge \bm e^{a_5}
%$$
%$$
%= \int \left[Ê(S({\lambda^c}_{b}) \tr \omega_c) ( \rho_{a_2 \dots a_5}) \right] \, \bm e^{b} \wedge \bm e^{a_2} \wedge \dots \wedge \bm e^{a_5}
%= \int \left[Ê( \rho_{a_2 \dots a_5}) (S({\lambda^c}_{b}) \,  T  \tr \omega_c) \right] \, \bm e^{b} \wedge \bm e^{a_2} \wedge \dots \wedge \bm e^{a_5}
%$$
%$$
%= \int \rho_{a_2 \dots a_5} \, \bm e^{b} \wedge \bm e^{a_2} \wedge \dots \wedge \bm e^{a_5} \, (S({\lambda^c}_{b}) \,  T  \tr \omega_c)
%= \int \rho_{a_2 \dots a_5} \, \bm e^{a_2} \wedge \dots \wedge \bm e^{a_5} \wedge \bm e^{b} \, (S({\lambda^c}_{b}) \,  T  \tr \omega_c)
%$$
%$$
%= \int \rho_{a_2 \dots a_5} \, \bm e^{a_2} \wedge \dots \wedge \bm e^{a_5} \wedge  ( T  \tr \omega_c)\, \bm e^{c}
%= \int \bm \rho \wedge  ( T  \tr \bm \omega)
%$$
%\begin{flushright}
%$\square$
%\end{flushright}

\section{Hodge-$*$ and metric}
\label{sec5}

We now introduce for the first time a metric structure in $\kappa$-Minkowski, through the Hodge-$\bm *$.

\begin{definition}
The Hodge-$\bm *$ is an involutive map 
\begin{equation}
\bm{*} : \Gamma^n \to \Gamma^{5-n}~, \qquad \bm{*} \circ \bm{*} = (-1)^{n(5-n)} id ~,
\end{equation}
which is left and right $\mc A$-linear:
\begin{equation}
\bm{*}( f \, \bm \omega ) = f \, \bm{*}( \bm \omega ) ~, \qquad  \bm{*}( \bm \omega \, f ) = \bm{*}( \bm \omega ) \, f~,
\end{equation}
such that the following sesquilinear form (symplectic form)
\begin{equation}
( \bm \omega , \bm \rho ) = \int \, \bm \omega^\dagger \wedge \bm{*} \bm \rho ~, \qquad  \bm \omega , \bm \rho  \in \Gamma^n~,
\end{equation}
is a nondegenerate (indefinite) inner product between forms of the same degree, that is,
\begin{equation}
\overline{( \bm \omega , \bm \rho )} = ( \bm \rho , \bm \omega )~.
\end{equation}
\end{definition}

\begin{prop}
The Hodge-$\bm{*}$ is defined by the following rules
%\begin{equation}
%\bm{*}(\id) = \bm{vol}^5 ~, ~~
%\left\{	\begin{array}{l}
%\bm{*}(\bm{e}^0) = - \bm{e}^1 \wedge \bm{e}^2 \wedge \bm{e}^3 \wedge \bm{e}^4  \\
%\bm{*}(\bm{e}^1) =  \bm{e}^2 \wedge \bm{e}^3 \wedge \bm{e}^4 \wedge \bm{e}^0  \\
%\bm{*}(\bm{e}^2) =  \bm{e}^3 \wedge \bm{e}^4 \wedge \bm{e}^0 \wedge \bm{e}^1  \\
%\bm{*}(\bm{e}^3) =  \bm{e}^4 \wedge \bm{e}^0 \wedge \bm{e}^1 \wedge \bm{e}^2  \\
%\bm{*}(\bm{e}^4) =  \bm{e}^0 \wedge \bm{e}^1 \wedge \bm{e}^2 \wedge \bm{e}^3  \\
%\end{array}\right.
% ~, ~~ 
%\left\{\begin{array}{l}
%\bm{*}(\bm{e}^0 \wedge \bm{e}^1) = - \bm{e}^2 \wedge \bm{e}^3 \wedge \bm{e}^4 \\
%\bm{*}(\bm{e}^0 \wedge \bm{e}^2) = \bm{e}^1 \wedge \bm{e}^3 \wedge \bm{e}^4 \\
%\bm{*}(\bm{e}^0 \wedge \bm{e}^3) = - \bm{e}^1 \wedge \bm{e}^2 \wedge \bm{e}^4 \\
%\bm{*}(\bm{e}^1 \wedge \bm{e}^2) = \bm{e}^3 \wedge \bm{e}^4 \wedge \bm{e}^0 \\
%\bm{*}(\bm{e}^2 \wedge \bm{e}^3) = \bm{e}^4 \wedge \bm{e}^0 \wedge \bm{e}^1 \\
%\bm{*}(\bm{e}^3 \wedge \bm{e}^1) = - \bm{e}^2 \wedge \bm{e}^4 \wedge \bm{e}^0 \\
%\bm{*}(\bm{e}^4 \wedge \bm{e}^0) = \bm{e}^1 \wedge \bm{e}^2 \wedge \bm{e}^3 \\
%\bm{*}(\bm{e}^4 \wedge \bm{e}^1) = \bm{e}^2 \wedge \bm{e}^3 \wedge \bm{e}^0 \\
%\bm{*}(\bm{e}^4 \wedge \bm{e}^2) =- \bm{e}^1 \wedge \bm{e}^3 \wedge \bm{e}^0 \\
%\bm{*}(\bm{e}^4 \wedge \bm{e}^3) =  \bm{e}^1 \wedge \bm{e}^2 \wedge \bm{e}^0 \\
%\end{array}\right. ~,
%\end{equation}
%or, in a more compact form
\begin{equation}
\left\{\begin{array}{l}
\bm{*}(\id) = \bm{vol}^5
\\
\bm{*}(\bm e^a) =  \frac{1}{4!} \eta^{ab} \epsilon_{bcdef} \, \bm e^c \wedge \bm e^d \wedge \bm e^e \wedge \bm e^f ~,
\\
\bm{*}(\bm e^a \wedge \bm e^b) = \frac{1}{3!} \eta^{ac}\eta^{bd} \epsilon_{cdefg} \, \bm e^e \wedge \bm e^f \wedge \bm e^g ~,
\end{array}\right. \label{Hodge*Rules}
\end{equation}
where $\eta^{ab} = \text{diag} \{-1,1,1,1,1 \}$,
%\begin{equation}
%\eta^{ab} ~=~
%\left(\begin{array}{ccccc}
%1 & & & & \\
% &-1 & & 0& \\
% & & -1 & & \\
% & 0& & -1 & \\
% & & & & -1\\
%\end{array}\right)
%\end{equation}
and $\varepsilon_{abcde}$ is the 5-dimensional Levi-Civita symbol.
\end{prop}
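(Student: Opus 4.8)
The plan is to verify that the prescription (\ref{Hodge*Rules}), extended to all of $\Gamma^{\wedge}$ by left and right $\mc A$-linearity, meets the three demands of the preceding Definition: involutivity, bimodule linearity, and the reality/nondegeneracy of the sesquilinear form. Since $\bm{*}$ is required to be $\mc A$-bilinear, it suffices to fix and test it on the constant basis forms $\bm e^{a_1}\wedge\dots\wedge\bm e^{a_n}$, of which every form is an $\mc A$-combination. The three displayed rules fix $\bm{*}$ on $\Gamma^0,\Gamma^1,\Gamma^2$; those on $\Gamma^3,\Gamma^4,\Gamma^5$ are then \emph{forced} by the involutivity axiom through $\bm{*}\big|_{\Gamma^{5-n}}=(-1)^{n(5-n)}\big(\bm{*}\big|_{\Gamma^n}\big)^{-1}$, which is well defined because each $\bm{*}\big|_{\Gamma^n}$ is invertible ($\eta$ being nondegenerate). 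I would compute these inverses explicitly with the contraction identities for pairs of five-dimensional Levi-Civita symbols evaluated in $\eta=\mathrm{diag}\{-1,1,1,1,1\}$; one finds the same $\eta$-$\epsilon$ pattern but carrying an extra $\det\eta=-1$, so that in particular $\bm{*}(\bm{vol}^5)=\id$. The bookkeeping of these signs is routine but error-prone; the only structural point is that the signature factor $\det\eta=-1$ is exactly what turns the naive $-\id$ into the $\bm{*}(\bm{vol}^5)=\id$ demanded by $\bm{*}^2=(-1)^{0}\,\mathrm{id}$.

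The genuinely noncommutative step, and the one I expect to be the main obstacle, is to show that this left-linear prescription is simultaneously right $\mc A$-linear, i.e.\ a true bimodule map. For one-forms this is the identity $\bm{*}(\bm e^a)\,f=\bm{*}(\bm e^a f)$: commuting $f$ rightwards through the four-form $\bm{*}(\bm e^a)$ with $\bm e^b f=({\lambda^b}_c\tr f)\,\bm e^c$ and comparing with the left-linear value of $\bm{*}(\bm e^a f)=({\lambda^a}_b\tr f)\,\bm{*}(\bm e^b)$ leads to the requirement $\eta^{ag}\epsilon_{ghijk}\,{\lambda^h}_{h'}{\lambda^i}_{i'}{\lambda^j}_{j'}{\lambda^k}_{k'}={\lambda^a}_b\,\eta^{bg}\epsilon_{gh'i'j'k'}$ in $\mc A^*$. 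This is precisely the invariance of $\eta$ and of $\epsilon_{abcde}$ under the $SO(4,1)$ matrices ${\lambda^a}_b$, the same property already used in the proof of the twisted cyclicity of the integral, so it holds; the case $n=0$ is just the centrality $[x^\mu,\bm{vol}^5]=0$, and the higher degrees are identical in structure. The same covariance also makes these the \emph{unique} bimodule maps of the correct degree built from the available invariant tensors.

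It remains to confirm that $(\bm\omega,\bm\rho)=\int\bm\omega^\dagger\wedge\bm{*}\bm\rho$ is a nondegenerate Hermitian form. On basis forms $\bm\omega^\dagger\wedge\bm{*}\bm\rho$ is proportional to $\bm{vol}^5$, with coefficient the induced $\eta$-pairing of the components of $\bm\omega$ and $\bm\rho$. The reality relation $\overline{(\bm\omega,\bm\rho)}=(\bm\rho,\bm\omega)$ then follows by combining $\overline{\int f}=\int f^\dagger$ from (\ref{InvolutionIntegral}), the involution law $(\bm e^{a_1}\wedge\dots\wedge\bm e^{a_m})^\dagger=(-1)^{m(m-1)/2}\bm e^{a_1}\wedge\dots\wedge\bm e^{a_m}$, the reality of $\eta$ and $\epsilon_{abcde}$, and the twisted cyclicity to restore the order of the wedge factors. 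Nondegeneracy follows because the induced bilinear form on $n$-forms inherits nondegeneracy from $\eta$, so the coefficient of $\bm{vol}^5$ cannot vanish identically in $\bm\omega$ for fixed $\bm\rho\neq0$, while the integral, realized as an ordinary integral, is faithful on $\mc A$. Throughout, the only inputs beyond linear algebra are the $SO(4,1)$-covariance of $\eta,\epsilon$ and the twisted cyclicity of $\int$, both already established.
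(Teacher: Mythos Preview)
Your proposal is correct and covers the same three obligations as the paper's proof, but the execution differs in one substantive place. For bimodule linearity the paper simply checks, by direct calculation on the generators $x^\mu$, that the commutators $[x^\mu,\bm e^a]$ are preserved under $\bm{*}$, i.e.\ that $[x^\mu,\bm{*}(\bm e^\nu)]=\tfrac{i}{\kappa}\big(\eta^{\mu\nu}\bm{*}(\bm e^0)-\eta^{0\nu}\bm{*}(\bm e^\mu)-\eta^{\mu\nu}\bm{*}(\bm e^4)\big)$ and $[x^\mu,\bm{*}(\bm e^4)]=\tfrac{i}{\kappa}\bm{*}(\bm e^\mu)$. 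You instead globalize this to the identity $\eta^{ag}\epsilon_{ghijk}\,{\lambda^h}_{h'}{\lambda^i}_{i'}{\lambda^j}_{j'}{\lambda^k}_{k'}={\lambda^a}_b\,\eta^{bg}\epsilon_{gh'i'j'k'}$ and deduce it from the $SO(4,1)$ property of ${\lambda^a}_b$; this is more conceptual and explains \emph{why} the formulas work, at the cost of assuming the reader is comfortable manipulating these $\mc A^*$-valued matrices. A few minor remarks: the paper does not actually verify the involutivity axiom in the proof (it is tacitly left to the reader), whereas you build it in by construction; your appeal to twisted cyclicity in the hermiticity step is unnecessary---the paper passes to components first, so one only needs $\overline{\int f}=\int f^\dagger$ on scalars and $(fg)^\dagger=g^\dagger f^\dagger$; and your nondegeneracy sketch is a bit loose, since faithfulness of $\int$ means $\int f^\dagger f=0\Rightarrow f=0$, not that $\int$ separates arbitrary elements. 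The paper makes this reduction explicit by pairing $\bm\omega$ with a sign-flipped copy $\tilde{\bm\omega}$ so that $(\bm\omega,\tilde{\bm\omega})=\sum_{a,b,\dots}\int(\omega^{ab\dots})^\dagger\omega^{ab\dots}$ is a positive sum of such norms, and then invokes the star-product realization to show $\int f^\dagger f=0\Rightarrow f=0$.
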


\begin{proof}
For the left and right $\mc A$-linearity It is sufficient to prove the compatibility of the commutation rules (\ref{OneFormsCompact}), which means
\begin{equation}
[x^\mu , \bm{*}(\bm{e}^\nu)] =  \frac{i}{\kappa} (\eta^{\mu\nu} \bm{*}(\bm{e}^0) - \eta^{0\nu} \bm{*}(\bm{e}^\mu ) - \eta^{\mu\nu} \bm{*}(\bm{e}^4) ) ~, \qquad [ x^\mu , \bm{*}(\bm{e}^{4})] =   \frac{i}{\kappa}\bm{*}(\bm{e}^\mu) ~,
\end{equation}
this can be verified by direct calculation.

It remains to show that the inner product is well-defined. For 0-forms it trivially descends from the compatibility of the integral with the involution. For n-forms we have
\begin{eqnarray}
\int \bm \omega^\dagger \wedge \bm{*} \bm \rho
&=&  \frac{1}{(5-n)!} \eta^{c_1 b_1}\dots\eta^{c_n b_n} \epsilon_{c_1 \dots c_5}
\int    \bm e^{a_1} \wedge \dots  \wedge \bm e^{a_n} \left( \omega_{a_1 \dots a_n}^\dagger\,  \rho_{b_1\dots b_n} \right) \bm e^{c_{n+1}} \wedge \dots  \wedge \bm e^{c_5}
\nonumber\\
&=&  \frac{1}{(5-n)! 5!} \eta^{c_1 b_1}\dots\eta^{c_n b_n} \epsilon_{c_1 \dots c_5} \epsilon^{a_1 \dots a_n c_{n+1}\dots c_5}
\int  \, \omega_{a_1 \dots a_n}^\dagger\,  \rho_{b_1\dots b_n}
\\
&=&  \frac{1}{(5-n)!} \eta^{a_1 b_1}\dots\eta^{a_n b_n} \int  \, \omega_{a_1 \dots a_n}^\dagger\,  \rho_{b_1\dots b_n}  \nonumber
\\
&=& \frac{1}{(5-n)!} \eta^{a_1 b_1}\dots\eta^{a_n b_n} \overline{\left( \int \, \rho_{b_1\dots b_n}^\dagger \, \omega_{a_1 \dots a_n} \right)} = \overline{ \left( \int \bm \rho^\dagger \wedge \bm{*} \bm \omega \right) } ~.
\nonumber
\end{eqnarray}

The nondegeneracy
\begin{equation}
( \bm \omega , \, \cdot \,  ) = 0 ~~~ \Leftrightarrow ~~~ \bm \omega = 0 ~.
\end{equation}
can be proven by considering the following object:
\begin{equation}
( \bm \omega ,  \tilde {\bm \omega}  ) = \sum_{a,b,\dots} \int \bm (\omega^{ab\dots})^\dagger \omega^{ab\dots}
\end{equation}
where $ \tilde { \omega}^{ab\dots}  = -  \omega^{ab\dots} $ if the number of indices $ab\dots$ which are zero is odd, and $ \tilde { \omega}^{ab\dots}  =  \omega^{ab\dots} $ otherwise.
Then $( \bm \omega ,  \tilde {\bm \omega}  )$ is a positive sum of terms of the type
$\int f^\dagger f$, and the following chain of implications follow:
$( \bm \omega , \bm \rho  ) = 0 ~~~ \forall \bm \rho$ ~~ $\Rightarrow$ ~~ $( \bm \omega ,  \tilde {\bm \omega}  ) = 0 $ ~~ $\Rightarrow$ ~~ $\int \bm (\omega^{ab\dots})^\dagger \omega^{ab\dots} = 0~~~ \forall a,b,\dots $.

We have then reduced the nondegeneracy of the inner product to the nondegeneracy of the norm of 0-forms.
\begin{equation}
\int f^\dagger f = 0 ~~ \Leftrightarrow ~~ f = 0 ~,
\end{equation}
I will prove this in the 2-dimensional case (3-dimensional differential calculus),
to exploit the results of \cite{KappaCorfu}. Generalization to 4 dimensions pose
no difficulties. From Eq. (2.2) and (2.3) of \cite{KappaCorfu} we can represent the $f^\dagger f$ through the star product as
\begin{equation}
(f^\dagger f)(\alpha,\beta) = \frac{1}{(2\pi)^2} \int dv du \,\int dw dz \, \bar
f(\alpha+u  + w , e^{-z/\kappa}\beta) \,f(\alpha,e^{-v/\kappa} \beta ) \, e^{-i (uv+wz)}~,
\end{equation}
the trace is represented as the ordinary Lebesgue integral over $\mathbb{R}^2$, so upon a simple change of variable we get
\begin{equation}
\int (f^\dagger f)(\alpha,\beta) = \frac{1}{(2\pi)^2} \int d \alpha d \beta \, \bar F(\alpha,\beta) \, F(\alpha,\beta)~,
\end{equation}
where $F(\alpha,\beta) = \frac{1}{2\pi} \int dv du \,
e^{i uv} \,f(\alpha + u,e^{-v/\kappa} \beta ) = f^\dagger (-\alpha,\beta)~,$
so that $\int (f^\dagger f)(\alpha,\beta)  = 0 ~ \Leftrightarrow ~ F(\alpha,\beta) = 0 $.
The proof is concluded by the observation that $F(\alpha,\beta) = 0 ~Ê \Rightarrow ~ f = 0$, which is trivial.

\end{proof}

The Hodge-$\bm{*}$ defined in this way is covariant under the action of the $\kappa$-Poincar\'e algebra $U(so(3,1)){\triangleright\!\!\!\blacktriangleleft} \mc A^*$, in the sense that
\begin{equation}
\bm{*} (h \tr \, \bm \omega ) = h \tr \, \bm{*}( \bm \omega ) ~, \qquad h \in U(so(3,1)){\triangleright\!\!\!\blacktriangleleft} \mc A^*~, \label{HodgeCovariance}
\end{equation}
in fact covariance under translations is trivial, because they have null action on basic forms
$$
P_\mu \tr \bm e^{a_1} \wedge \dots \wedge \bm e^{a_n} = 0~,
$$
and the left and right $\mc A$-linearity of the Hodge-$\bm *$ implies eq. (\ref{HodgeCovariance}) for $h \in \mc A^*$.

Lorentz covariance is no less straightforward. The action of both boost $N_j$ and rotation $R_k$ generators on basic forms is primitive, in the sense that on products of $\bm e^a$ they act with the Leibniz rule, \emph{e.g.}:
$$
N_j \tr (\bm e^a \wedge \bm e^b) = (N^{(1)}_j \tr \bm e^a ) \wedge (N^{(2)}_j \tr \bm e^a ) = $$
$$
=(N_j \tr \bm e^a ) \wedge \bm e^b + ( e^{-\lambda P_0} \tr \bm e^a ) \wedge (N_j \tr \bm e^b) + \epsilon_{jkl} (P_k \tr \bm e^a) \wedge (R_l \tr \bm e^b )
$$
$$
=(N_j \tr \bm e^a ) \wedge \bm e^b + \bm e^a  \wedge (N_j \tr \bm e^b) ~,
$$
and they both have classical action over a single one-form \cite{Sitarz}:
\begin{equation}
N_j \, \tr \, \bm e^k = i \, {\delta^k}_j \, \bm e^0 ~, \qquad N_j \, \tr \, \bm e^0 = - i \, \bm e^j ~, \qquad N_j \, \tr \, \bm e^4 = 0 ~,
\end{equation}
\begin{equation}
R_j \, \tr \, \bm e^k = i \, \epsilon_{jkl} \, \bm e^l ~, \qquad R_j \, \tr \, \bm e^0 = 0 ~, \qquad R_j \, \tr \, \bm e^4 = 0 ~,
\end{equation}
so it's easy to see that the rules (\ref{Hodge*Rules}) are covariant. Then the covariance for general forms is proven through the left and right $\mc A$-linearity.

The $\kappa$-Hodge-$*$ induces a \emph{metric}, understood as a sesquilinear map of one-forms $g: \Gamma \otimes \Gamma \to \mathcal A$
\begin{equation}
g( \bm \omega , \bm \rho ) = * ( \bm \omega^\dagger \wedge * \bm \rho )Ê~,
\end{equation}
which is hermitian
\begin{equation}
g( \bm \omega , \bm \rho ) =g( \bm \rho, \bm \omega )^\dagger~.
\end{equation}
If applied to the basis forms the metric gives its components
\begin{equation}
g ( \bm e^a , \bm e^b ) = \eta^{ab} ~.
\end{equation}

\newpage
\section{Classical field theory}
\label{sec6}

With $\kappa$-deformed \emph{classical} field theory I mean any theory
which substitutes elements of $\mc A$ or $\Gamma^{\wedge}$ to scalar or
tensor fields, and which is based on a variational principle or simply on
equations of motion, which identify some subset of $\mc A$ (or $\Gamma^{\wedge}$) as the space of solutions. A $\kappa$-deformed \emph{quantum} field
theory should be based on an appropriately defined measure over $\mc A$,
and an associated partition function, allowing to perform a path integral.
The understanding of classical field theory should prelude the study of
quantum field theory over $\kappa$-Minkowski, as is the case also in
the commutative Minkowski space.

There is a very strong physical motivation for the study of $\kappa$-deformed
field theory, coming from 2+1-dimensional baground-independent quantum gravity \cite{FreidelLivine}.
In 2+1 dimensions, Einstein gravity reduces to a topological field theory
which is solvable, and quantizable with a path integral through spin-foam techniques.
Coupling this theory to a scalar field, and integrating out the gravitational
degrees of freedom (which corresponds to taking the ``no-gravity'' limit
$G\to0$) one ends up with an effective partition function for the scalar field,
in which the field is valued in $\mc A$, the 2+1 dimensional $\kappa$-Minkowski space. This gives an indication that $\kappa$-Minkowski may be the fundamental
state of quantum gravity, and its noncommutativity could be a manifestation
of the non-local correlations induced on fields by the quantum gravitational degrees
of freedom. This calculation cannot be performed in 3+1 dimension
where quantum gravity is not understood, but is nevertheless one of the 
most significant results in quantum gravity, pointing out that its fundamental
state is likely not to be a classical spacetime.

Therefore the study of field theory over $\kappa$-Minkowski is very relevant
for physics, as it may provide the interface between quantum gravity,
noncommutative geometry and their observable manifestations.
Today there is a fairly large literature on $\kappa$-deformed field theory 
\cite{LukierskiFieldTheory,gacmajid,kappanoether,Nopure,Antonino5D,KowalskiDeSitter,KowalskiMicheleKappa}. However, until now, it was hard to build a field theory
which is manifestly Lorentz covariant, as the only tool at disposal to define
``vectors'' was Sitarz' differential calculus, and one needs much more: at
least the higher-degree forms, but also an Hodge-$*$ to create maximal
degree forms out of them, and an integral to form an action. I introduced
all of these structure with the explicit  purpose of making this possible,
as I'll show in this section.

Some authors have considered the problem
of establishing an analogue of the Noether theorem in these theories,
associating conserved charges to  the symmetries of $\kappa$-Minkowski
\cite{kappanoether,Nopure,Antonino5D,KowalskiSchifo}.
However, the Lorentz covariance of the conserved charges that were
found was never considered, and the meaning of such conservation 
laws remained obscure.
I present a geometrical way of understanding the conservation laws,
which is allowed by the differential-geometrical tools I developed in 
the previous sections. A conservation law is expressed as the closure
of a current vector-valued 4-form. This form is the energy-momentum tensor
expressed in the language of differential forms. In the commutative
Minkowski space its analogue is a vector-valued 3-form, but here
we need a 4-form due to the additional dimension of the differential
calculus. To calculate this current 4-form, I will need also the Lie and
inner derivative, and all of the new structure I introduced in this paper
will then find an application in field theory.

\subsection{Scalar field}

As action for a complex scalar field we take
\begin{equation}
S = \frac{1}{2} \int \left\{ (\bm d \phi)^\dagger \wedge \bm{*}(\bm d \phi)
+ m^2 \phi^\dagger \wedge \bm{*}(\phi) \right\}~,
\end{equation}

with some calculations we can see that this action is the same as that
used in \cite{Nopure}
\begin{eqnarray}
S &=& \frac{1}{2} \int \left\{ (\xi_a \tr \phi^\dagger) \bm e^a \wedge \xi_b \tr  \phi \bm{*}(\bm e^b)
+ m^2 \phi^\dagger \, \phi \, \bm{vol}^5 \right\}~,
\nonumber \\ &=&
 \frac{1}{2} \int \left\{ \eta^{bc} (\xi_a \tr \phi^\dagger) ({\lambda^a}_b \xi_c \tr  \phi )
+ m^2 \phi^\dagger \, \phi \right\} \bm{vol}^5 ~,
\nonumber \\ &=&
 \frac{1}{2} \int \left\{ - \phi^\dagger (\eta^{bc}  \xi_a  \xi_b \tr  \phi )
+ m^2 \phi^\dagger \, \phi \right\} \bm{vol}^5 ~,
\nonumber \\ &=&
 \frac{1}{2} \int \left\{ - \phi^\dagger (\square_\kappa \tr  \phi )
+ m^2 \phi^\dagger \, \phi \right\} \bm{vol}^5 ~.
\end{eqnarray}

To calculate the equations of motion we make use of a variational procedure,
which, written in Fourier transform following the techniques shown in \cite{SitarzDurhuss,KappaCorfu} gives the same results and makes perfectly sense
\begin{eqnarray}
\delta S &=& \frac{1}{2} \int \left\{ \bm d (\delta \phi^\dagger) \wedge \bm{*}(\bm d \phi) +\bm d (\phi^\dagger) \wedge \bm{*}(\bm d \delta \phi)
+ m^2 \delta \phi^\dagger \wedge \bm{*}(\phi) + m^2 \phi^\dagger \wedge \bm{*}(\delta \phi) \right\}
\nonumber \\ &=&
 \frac{1}{2} \int \left\{\delta \phi^\dagger \left[  m^2 \, \bm{*}(\phi)  - \bm d \, \bm{*} \, \bm d \, (\phi) \right] +\bm d (\phi^\dagger) \wedge \bm{*}(\bm d \delta \phi)
+ m^2 \phi^\dagger \wedge \bm{*}(\delta \phi) \right\}
\\ &=&
 \frac{1}{2} \int \delta \phi^\dagger \wedge \left[ -\bm d  \bm{*}\bm d \, \phi  + m^2 \bm{*}(\phi) \right] + \frac{1}{2} \int \left[ -\bm d  \bm{*}\bm d \, \phi^\dagger  + m^2 \bm{*}(\phi^\dagger) \right] \wedge \delta \phi~, \nonumber
\end{eqnarray}
imposing the minimization of the action functional we end up with the following
equations of motion
%\begin{equation}
%\begin{array}{c}
%\bm d  \bm{*}\bm d \, \phi  - m^2 \bm{*}(\phi) = \bm*( \bm * \bm d  \bm{*}\bm d \, \phi  - m^2  \phi )~, 
%\vspace{6pt} \\
%\bm d  \bm{*}\bm d \, \phi^\dagger  - m^2  \bm{*}(\phi^\dagger)  = \bm * ( \bm * \bm d  \bm{*}\bm d \, \phi^\dagger  -  m^2  \phi^\dagger)~,
%\end{array}
%\end{equation}
\begin{equation}
\delta S = 0 ~~Ê\Rightarrow ~~ \bm{*}  \bm d  \bm{*}\bm d \, \phi  - m^2 \, \phi = 0~, ~~ \bm{*} \bm d  \bm{*}\bm d \, \phi^\dagger  -  m^2 \, \phi^\dagger  = 0~.
\end{equation}
We easily see that the map $ \bm{*}  \bm d  \bm{*}\bm d $ is identical to the action of the mass casimir $\square_\kappa$ on scalar fields
\begin{eqnarray}
&&\bm{*}  \bm d  \bm{*}\bm d \, \phi  =
\bm{*}  \bm d \, [\xi_a \tr \phi  \,\bm{*}( \bm e^a )]=
\frac{1}{5!} \eta^{ab} \varepsilon_{bcdef} \,\bm{*}  \bm d \, [ \xi_a \tr \phi  \, \bm e^c \wedge \bm e^d \wedge \bm e^e \wedge \bm e^f] = \nonumber \\
&& \frac{1}{5!} \eta^{ab} \varepsilon_{bcdef} \, \xi_g \xi_a \tr \phi \,  \bm{*} ( \bm e^c \wedge \bm e^d \wedge \bm e^e \wedge \bm e^f \wedge \bm e^g)= \\
&& \eta^{ab} \varepsilon_{bcdef} \varepsilon^{cdefg} \, \xi_g \xi_a \tr \phi \,  \bm{*} ( \bm{vol}^5) =  \eta^{ab} \, \xi_b \xi_a \tr \phi = \square_\kappa \tr \phi ~. \nonumber
\end{eqnarray}

\subsection{Noether theorem and energy-Momentum tensor}

The following current vector-valued four-form:
\begin{equation}
\bm j_a = \frac{1}{2} \left\{ (\chi_a \tr \phi^\dagger) \wedge \bm *\, \bm d \, \phi
+ \bm  * \, \bm d ({\sigma^b}_a \tr \phi^\dagger) \wedge (\chi_b \tr \phi) \right\} - \bm i_a (\mathscr{L}) ~,
\end{equation} 
where $\mathscr{L} = \mc L \, \bm{vol}^5 =  \frac{1}{2}  \left\{ - \phi^\dagger (\square_\kappa \tr  \phi )
+ m^2 \phi^\dagger \, \phi \right\} \bm{vol}^5$, is conserved on-shell, in the sense that it is a closed form when $\phi$ and $\phi^\dagger$ minimize the action. Let's prove it:
\begin{eqnarray}
\bm d \, \bm j_a = \frac{1}{2} && \left\{  \bm d (\chi_a \tr \phi^\dagger) \wedge \bm *\, \bm d \, \phi
+ \bm d \bm  * \, \bm d ({\sigma^b}_a \tr \phi^\dagger) \wedge (\chi_b \tr \phi) 
\right. \nonumber \\
&&\left.+  (\chi_a \tr \phi^\dagger) \wedge \bm d \bm *\, \bm d \, \phi
+ \bm  * \, \bm d ({\sigma^b}_a \tr \phi^\dagger) \wedge \bm d  (\chi_b \tr \phi) 
\right\} - \bm \pounds_{\chi_a}  (\mathscr{L}) ~,
\end{eqnarray}
using the equations of motion
$$
\bm d \, \bm  * \, \bm d \, (\phi) = m^2 \bm * \,(\phi) ~, \qquad
\bm d \, \bm  * \, \bm d \, (\phi^\dagger) = m^2 \bm * \,(\phi^\dagger) ~,
$$
\begin{eqnarray}
\bm d \, \bm j_a = \frac{1}{2} && \left\{  \bm d (\chi_a \tr \phi^\dagger) \wedge \bm *\, \bm d \, \phi
+ m^2 \, \bm * ({\sigma^b}_a \tr \phi^\dagger) \wedge (\chi_b \tr \phi) 
\right. \nonumber \\
&&\left.+ m^2 (\chi_a \tr \phi^\dagger) \wedge \bm * (\phi)
+ \bm  * \, \bm d ({\sigma^b}_a \tr \phi^\dagger) \wedge \bm d  (\chi_b \tr \phi) 
\right\} - \bm \pounds_{\chi_a}  (\mathscr{L}) ~,\end{eqnarray}
it's trivial to prove the identities
\begin{eqnarray*}
\bm  * \, \bm d ({\sigma^b}_a \tr \phi^\dagger) \wedge \bm d  (\chi_b \tr \phi) 
=\bm d ({\sigma^b}_a \tr \phi^\dagger) \wedge  \bm  * \, \bm d  (\chi_b \tr \phi)  ~,
\\
m^2 \, \bm * ({\sigma^b}_a \tr \phi^\dagger) \wedge (\chi_b \tr \phi) =
m^2 ({\sigma^b}_a \tr \phi^\dagger) \wedge \bm * (\chi_b \tr \phi) ~,
\end{eqnarray*}
then the first term becomes equal to the action of the Lie derivative over
the Lagrangian
\begin{equation}
\bm d \, \bm j_a = \frac{1}{2} \bm \pounds_{\chi_a}  \left\{ 
\bm d \,(\phi^\dagger) \wedge \bm *\, \bm d \, ( \phi)
+ m^2 \phi^\dagger \wedge \bm *\, (\phi)
\right\} - \bm \pounds_{\chi_a}  (\mathscr{L}) = 0~.
\end{equation}

The components of the current form are the components of the energy-momentum tensor:
\begin{equation}
\bm j_a =  \bm *(\bm e^b) \, T_{ab}~,
\end{equation}
these components are
\begin{equation}
T_{ab} = \frac{1}{2} \left\{ \left({\sigma^b}_b\chi _a\triangleright \phi ^{\dagger }\right)\left(\chi _c\triangleright \phi \right)+\left({\sigma ^c}_a \chi _b\triangleright \phi ^{\dagger }\right)\left(\chi _c \triangleright \phi \right) \right\} - \eta_{ab} \mc L ~,
\end{equation}
if we take a solution of the equations of motion (the order of the $x$\,s is relevant),
\begin{equation}
\phi = e^{i \vec k \cdot \vec x} e^{-i k_0 x_0} ~,  ~~  \eta^{ab}\chi_a(k) \chi_b(k) = m^2 ~,
\end{equation}
which is at the same time an eigenfunction of the $\xi_a$ vector fields:
\begin{equation}
\chi_a \tr \phi =  \chi_a(k) \phi ~, ~~ \chi_a \tr \phi^\dagger = \xi_b(k) \phi^\dagger ~,
\end{equation}
where $\chi_a(k) =  \left\{- \kappa \, \sinh \frac{k_0}{\kappa} - \frac{1}{2 \kappa} e^{k_0 / \kappa} |\vec k|^2   ,  e^{\lambda k_0} \vec k , - \kappa \,  \cosh \frac{k_0}{\kappa} - \frac{1}{2 \kappa} e^{k_0 / \kappa} |\vec k|^2    + \kappa \right\}$ and similarly for $\xi_a(k)$,
and evaluate the energy-momentum tensor over this solution we get
\begin{equation}
T_{ab} = - \frac{1}{2} \left[ \xi_a(k) \xi_b(k) + \xi_b(k) \xi_a(k) \right] \phi^\dagger \phi~.
\end{equation}

We are also able to identify a current 3-form associated to the symmetry under global phase transformations $\phi' = e^{i \alpha} \phi$,
\begin{equation}
\bm j = \bm*\left(\phi ^{\dagger } \, \bm d \phi  - \bm d \phi ^{\dagger }\,  \phi \right)~,
\end{equation}
which is conserved on-shell
\begin{equation}
\bm d \, \bm j = \bm d \phi ^{\dagger } \, \bm*\left( \bm d \phi \right) - \bm*\left( \bm d \phi ^{\dagger }\right) \, \bm d \phi +\phi ^{\dagger } \left( \bm d \bm* \bm d \, \phi \right) - \left( \bm d \bm* \bm d \, \phi ^{\dagger }\right) \phi = 0 ~.
\end{equation}

\section{Conclusions}
\label{Conclusions}

I defined constructively most of the structures that are needed to
do differential geometry on $\kappa$-Minkowski. These structures
are all covariant under the symmetries of this noncommutative
spacetime. They allow two kind of future developments: one is 
the study of field theory over $\kappa$-Minkowski, for which now
we are equipped with all of the necessary tools to construct
covariant field theories. We can define vector and tensor fields
with the differential forms of various degrees, we can multiply
these forms thanks to the Hodge-$*$ and the integral, which
allow to associate scalar numbers to every field, as required
by an action principle. We can act with a vector field on forms
through the Lie derivative and reduce their degree with the
internal derivative, and this is sufficiently powerful to construct
the conserved currents associated to the symmetries of the
spacetime. We are now armed with sufficient tools to start
doing serious quantum field theory on $\kappa$-Minkowski,
exploiting its symmetries in the correct way.
Another strand of studies which can take this paper as starting point
is a more geometrical study of the properties of $\kappa$-Minkowski,
which exploits the differential-form structures I defined to build
actual differential-geometric objects on it, like Cartan's frame
fields, a connection, torsion and curvature, and so on. This,
in the long term, might even lead to a proposal for a perturbative
construction of quantum gravity as a noncommutative field
theory, if $\kappa$-Minkowski proves to be a good fundamental
state for quantum gravity.

An interesting recent development in quantum gravity is the so-called
``Relative Locality'' proposal, which is a framework for interpreting
the classical remnants of quantum gravity effects in terms of a curved
momentum space \cite{AFKSfqxi,FlavioPRL,AFKS}.
I have shown in a paper \cite{kappaLocality} with G. Gubitosi that 
the $\kappa$-Poincar\'e quantum group fits perfectly in the framework
of this theory, and all the Hopf-algebra structures of this quantum
group are necessary to identify a coherent Relative Locality model.
This suggest the following interpretative scheme:
the ``classical'' field theory considered in this paper should only be 
understood as preparatory to a quantum field theory expressed in
terms of a path integral. Its classical limit $\hbar \to 0$ should eliminate
the noncommutativity of $\mc A$, but should leave a trace into
the symplectic structure of the phase space of particles.
Moreover, this phase space in the classical limit should tend to the
cotangent bundle of a curved momentum space, which I described
in \cite{kappaLocality}.

\subsection*{Acknowledgments}

I whish to thank Andrzej Sitarz, Shahn Majid, Jos\'e Gracia-Bond\'ia and Matteo Giordano for useful observations and comments.

\providecommand{\href}[2]{#2}\begingroup\raggedright\endgroup

\end{document}